\documentclass[12pt,oneside,reqno]{amsart}
\usepackage[margin=100pt]{geometry}

\usepackage{graphicx} % Required for inserting images

\usepackage{amsmath, amsthm, amssymb, amsfonts, mathtools}
\usepackage{mathrsfs}
\usepackage{hyperref}
\usepackage{subcaption}

\usepackage[colorinlistoftodos]{todonotes}

\newcommand{\norm}[1]{\lVert#1\rVert}

\newtheorem{theorem}{Theorem}
\newtheorem*{theorem*}{Theorem}
\newtheorem{conjecture}{Conjecture}
\newtheorem{lemma}{Lemma}

\newtheorem{corollary}{Corollary}

\newcommand{\sfE}{\mathsf{E}}
\newcommand{\C}{\mathbb{C}}
\newcommand{\D}{\mathbb{D}}
\newcommand{\N}{\mathbb{N}}
\newcommand{\R}{\mathbb{R}}
\newcommand{\T}{\mathbb{T}}

\title{Chebyshev polynomials corresponding to a vanishing weight}
\author{Alex Bergman$^{1}$ \& Olof Rubin$^{2}$}
\address{Centre for Mathematical Sciences, Lund University\\Box 118, SE-22100, Lund,  Sweden\\}
\thanks{$^1$ Centre for Mathematical Sciences, Lund University, Box 118, 22100 Lund, Sweden.
E-mail: alex.bergman@math.lu.se}
\thanks{$^2$ Centre for Mathematical Sciences, Lund University, Box 118, 22100 Lund, Sweden.
E-mail: olof.rubin@math.lth.se}
%\email{alex.bergman@math.lu.se, olof.rubin@math.lth.se}
%\keywords{Weighted Chebyshev polynomials, Polynomial inequalities, Erd\H{o}s--Lax inequality, Powers of polynomials} 
%\subjclass{41A50. 30C10. 30A10. 26D05. 41A17. }
\date{\today}

\begin{document}

\maketitle

\begin{abstract}
We consider weighted Chebyshev polynomials on the unit circle corresponding to a weight of the form $(z-1)^s$ where $s>0$. For integer values of $s$ this corresponds to prescribing a zero of the polynomial on the boundary.
% Our contribution lies in relaxing the requirements, allowing $s$ to be any positive real number.
As such, we extend findings of Lachance, Saff and Varga \cite{lsv79}, to non-integer $s$. Using this generalisation, we are able to relate Chebyshev polynomials on lemniscates and other, more established, categories of Chebyshev polynomials. An essential part of our proof involves the broadening of the Erd\H{o}s--Lax inequality to encompass powers of polynomials. We believe that this particular result holds significance in its own right. 
\end{abstract}
\medskip	
	
	{\bf Keywords} {Weighted Chebyshev polynomials, Polynomial inequalities, Erd\H{o}s--Lax inequality, Powers of polynomials}
	
\medskip	
	
	{\bf Mathematics Subject Classification} {41A50. 30C10. 30A10. 26D05. 41A17. }
\section{Introduction}
Given a compact set $\sfE\subset \C$ and a continuous weight function $w:\sfE\rightarrow [0,\infty)$, a classical problem within the field of approximation theory concerns finding parameters $a_1^\ast,\dotsc,a_n^\ast\in \C$ satisfying
\begin{equation}
    \max_{z\in \sfE}\left|w(z)\prod_{k=1}^{n}(z-a_k^\ast) \right|= \min_{a_1,\dotsc,a_n\in \C}\max_{z\in \sfE}\left|w(z)\prod_{k=1}^{n}(z-a_k)\right|.
    \label{eq:chebyshev_equation}
\end{equation}
The existence of a minimiser is guaranteed through a compactness argument. Further, there is a unique minimiser if $w$ is non-zero at infinitely many points of $\sfE$, implicitly imposing that $\sfE$ contains infinitely many points. We denote this minimiser with
\[T_n^w(z) = \prod_{k=1}^{n}(z-a_k^\ast).\]

By definition, $T_n^w$ is the unique monic polynomial of degree $n$ whose deviation from $0$ when multiplied with $w$ is as small as possible. It is called the weighted Chebyshev polynomial corresponding to $w$. These polynomials exhibit further structure when $\sfE$ is a real set. In this case, $T_n^{w}$ is a real polynomial characterised by possessing an alternating set consisting of $n+1$ points. By this we mean that if $P$ is a monic polynomial of degree $n$, corresponding to which there are associated points $x_0<x_1<\cdots <x_{n}$ in $\sfE$ such that
\begin{equation}
    w(x_j)P(x_j) = (-1)^{n-j}\max_{z\in \sfE}|w(z)P(z)|,
    \label{eq:alternation_theorem}
\end{equation}
then $P = T_n^{w}$, see e.g. \cite[Theorem 7]{nsz21}. Such a characterisation does not, however, hold for arbitrary compact subsets of the complex plane. For further results concerning the basic theory of weighted Chebyshev polynomials and proofs regarding existence and uniqueness, we refer the reader to \cite{achieser56, csz-I,csz-IV, lorentz66, nsz21, sl68}. 

Throughout the text we adopt the notation $\D$ for the open unit disk and $\T$ for the unit circle. Furthermore, $\|\cdot \|_{\sfE}$ will denote the supremum norm corresponding to a given compact set $\sfE$. For the case where the weight in question is constantly equal to $1$, we adopt the notation $T_n^{\sfE}$ to denote the minimiser of \eqref{eq:chebyshev_equation} which is the Chebyshev polynomial corresponding to $\sfE$.

% As an example of an application of \eqref{eq:szego} we compute $T_n^{\overline{\D}}$ where $\D$ denotes the open unit disk and $\overline{\D}$ its closure. We will also use $\T$ to denote the unit circle. It is known that $\Capacity(\overline{\D}) = 1$ and since the polynomial $z^n$ saturates the lower bound in \eqref{eq:szego} corresponding to the case $\sfE = \overline{\D}$ we find that \[T_n^{\overline{\D}}(z) = z^n.\]

\subsection{A specific weighted Chebyshev problem}

In \cite[Problem 8.2]{pommerenke72} Hal\'{a}sz proposed to determine
\begin{equation}
    \lambda_n := \max \Big\{|P(0)|: \|P\|_{\T}\leq 1,\, P(1) = 0,\, \deg(P) \leq n\Big\}
    \label{eq:halasz}
\end{equation}
since this can be used to get information about Tur\'{a}n's inequality for lacunary polynomials.

Blatt, see e.g. \cite{lsv79}, suggested to consider the related problem of determining
\begin{equation}
    \mu_n := \min \left\{\|P\|_{\T}:P(z) = (z-1)\prod_{k=1}^{n-1}(z-a_k)\right\}.
    \label{eq:blatt}
\end{equation}
In order to discuss these and related problems, we introduce the function $w_s:\overline{\D}\rightarrow \C$ defined by
\begin{equation}
    w_s(z) = (z-1)^s
\end{equation}
where $s\in [0,\infty)$ and consider $T_n^{w_s} := T_n^{|w_s|}$. The set $\sfE$ in \eqref{eq:chebyshev_equation} defining $T_n^{w_s}$ is in this case $\T$. By definition,
\begin{equation}
    \|w_sT_{n}^{w_s}\|_{\T} = \min_{a_1,\dotsc,a_{n}\in \C}\max_{z\in \T}\left|(z-1)^s\prod_{k=1}^{n}(z-a_k)\right|
    \label{eq:arbitrary_zeros}
\end{equation}
and it is clear from \eqref{eq:blatt} and \eqref{eq:arbitrary_zeros} that
\[\mu_{n+1} = \|w_1T_n^{w_1}\|_{\T}.\]

The quantities $\mu_n$ and $\lambda_n$ can be related using so-called reciprocal polynomials. If 
\[P(z) = c\prod_{k=1}^{n}(z-a_k)\]
then the reciprocal of $P$ is defined as the polynomial
\begin{equation}
    P^\ast(z) = \overline{c}\prod(1-\overline{a}_kz).
    \label{eq:reciprocal}
\end{equation}
These polynomials have the same absolute value on $\T$, that is, $|P(e^{it})| = |P^\ast(e^{it})|$ for any $t\in \R$. Also if $z\neq 0$ and  $P(z) = 0$ then $P(1/\bar{z}) = 0$. It is easily seen that
\[\left(\frac{w_1(z)T_{n-1}^{w_1}(z)}{\|T_{n-1}^{w_1}w_1\|_{\T}}\right)^\ast\]
is a solution to \eqref{eq:halasz} for $n>1$ and thus
\[\mu_n = \frac{1}{\lambda_n}.\]

The exact values of $\lambda_n$ and $\mu_n$ were determined by Lachance, Saff and Varga in \cite{lsv79}. In particular, they showed that
\begin{equation}
    \lambda_n = \left[\cos\left(\frac{\pi}{2(n+1)}\right)\right]^{n+1}.\label{eq:halasz_sol}
\end{equation}This result was achieved by relating $w_{1}T_n^{w_1}$ to weighted Chebyshev polynomials on the interval $[-1,1]$, corresponding to the weight functions $(1-x)$ and $(1-x)(1+x)^{1/2}$. Such polynomials in turn coincide with dilations of the classical Chebyshev polynomials and hence their precise representation is known. Lachance, Saff and Varga substantially generalised the original problem proposed by Hal\'{a}sz and Blatt by showing that one can relate  $w_s T_n^{w_s}$ to weighted Chebyshev polynomials on $[-1,1]$ for any integer value of $s$, a result that we wish to extend to general $s\in (0,\infty)$.

An intermediate step in relating $w_sT_n^{w_s}$ to weighted Chebyshev polynomials on the interval is to consider the following minimisation problem. Find $\alpha_1^\ast\dotsc,\alpha_n^\ast\in [0,2\pi)$ such that
\begin{equation}
    \max_{z\in \T}\left|(z-1)^s\prod_{k=1}^{n}(z-e^{i\alpha_k^\ast})\right| = \min_{\alpha_1,\dotsc,\alpha_n\in [0,2\pi)}\max_{z\in \T}\left|(z-1)^s\prod_{k=1}^{n}(z-e^{i\alpha_k})\right|.
    \label{eq:fixed_zeros}
\end{equation}
The difference between the minimisation problems in \eqref{eq:arbitrary_zeros} and \eqref{eq:fixed_zeros} is that the zeros are restricted to lie on the unit circle in \eqref{eq:fixed_zeros}. It is not clear from classical theory whether a minimiser of \eqref{eq:fixed_zeros} is unique. Our main result is the following theorem.

\begin{theorem}
    For $n\in \N$ and $s\in [0,\infty)$, let $\mathring{T}_n^{w_{s+1}}$ denote any minimiser of \eqref{eq:fixed_zeros} with parameter $s+1$. Then
    \begin{equation}
        \frac{1}{s+n+1}\frac{d}{dz}\left\{w_{s+1}(z) \mathring{T}_n^{w_{s+1}}(z)\right\} = w_s(z)T_n^{w_s}(z)
        \label{eq:lsv_derivative}
    \end{equation}
    and
    \begin{equation}
        \|w_sT_n^{w_s}\|_\T = \frac{1}{2}\|w_{s+1}\mathring{T}_n^{w_{s+1}}\|_\T.
        \label{eq:weight_norm_comparison_general}
    \end{equation}
    Consequently, $\mathring{T}_n^{w_{s}}$ is unique for any $s\in [1,\infty)$.
    \label{thm:generalised_LSV}
\end{theorem}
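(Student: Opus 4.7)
The plan is to set $f(z) := w_{s+1}(z)\mathring{T}_n^{w_{s+1}}(z) = (z-1)^{s+1}\mathring{T}_n^{w_{s+1}}(z)$ and to exploit the paper's announced extension of the Erd\H{o}s--Lax inequality applied to $f$. First, by the product rule,
\[f'(z) = (z-1)^s\bigl[(s+1)\mathring{T}_n^{w_{s+1}}(z) + (z-1)(\mathring{T}_n^{w_{s+1}})'(z)\bigr].\]
The bracketed factor is a polynomial in $z$ of degree $n$ with leading coefficient $(s+1)+n = s+n+1$, so $f'/(s+n+1) = w_s P$ for some monic polynomial $P$ of degree $n$. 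This identifies the left-hand side of \eqref{eq:lsv_derivative} as $w_s P$; what remains is to establish the norm identity \eqref{eq:weight_norm_comparison_general} and to conclude that $P = T_n^{w_s}$.

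Since every zero of $\mathring{T}_n^{w_{s+1}}$ lies on $\T$, in particular none lies in $\D$, the extended Erd\H{o}s--Lax inequality applies to $f$ and yields
\[\|f'\|_\T \leq \frac{s+n+1}{2}\|f\|_\T,\]
which rearranges to $\|w_s P\|_\T \leq \tfrac{1}{2}\|w_{s+1}\mathring{T}_n^{w_{s+1}}\|_\T$. Combined with the defining minimality of $T_n^{w_s}$ among monic polynomials of degree $n$, this delivers the easy half of \eqref{eq:weight_norm_comparison_general}:
\[\|w_s T_n^{w_s}\|_\T \leq \|w_s P\|_\T \leq \tfrac{1}{2}\|w_{s+1}\mathring{T}_n^{w_{s+1}}\|_\T.\]

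The main obstacle will be the reverse bound $\tfrac{1}{2}\|w_{s+1}\mathring{T}_n^{w_{s+1}}\|_\T \leq \|w_s T_n^{w_s}\|_\T$. My strategy is to produce an explicit competitor for \eqref{eq:fixed_zeros} from $T_n^{w_s}$ itself. Expanding $T_n^{w_s}(z) = \sum_{k=0}^n a_k(z-1)^k$ around $z=1$, the natural antiderivative
\[q(z) := (s+n+1)\sum_{k=0}^n \frac{a_k}{s+k+1}(z-1)^k\]
is monic of degree $n$ and satisfies $(w_{s+1}q)' = (s+n+1)\, w_s T_n^{w_s}$ by construction. The crux is to show that every zero of $q$ lies on $\T$, for then $q$ is admissible in \eqref{eq:fixed_zeros}, giving $\|w_{s+1}\mathring{T}_n^{w_{s+1}}\|_\T \leq \|w_{s+1}q\|_\T$; combining this with the equality/saturation case of the extended Erd\H{o}s--Lax inequality (which is sharp precisely for functions whose polynomial factor has all zeros on $\T$, as one verifies from the very small $n$ check $f=(z-1)^s$) gives $\|w_{s+1}q\|_\T = 2\|w_s T_n^{w_s}\|_\T$. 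The zero-location statement for $q$ should be extracted from the extremal structure of $T_n^{w_s}$, plausibly via a Kolmogorov-type characterisation for complex Chebyshev problems on $\T$ combined with the rigidity forced by the $(z-1)^s$ factor at the boundary.

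Once both inequalities are secured, equality in the chain $\|w_s T_n^{w_s}\|_\T \leq \|w_s P\|_\T \leq \tfrac{1}{2}\|w_{s+1}\mathring{T}_n^{w_{s+1}}\|_\T$ forces $\|w_s P\|_\T = \|w_s T_n^{w_s}\|_\T$, whereupon uniqueness of the Chebyshev polynomial $T_n^{w_s}$ forces $P = T_n^{w_s}$, proving \eqref{eq:lsv_derivative}. The final uniqueness claim is then immediate: for $s\geq 1$, any minimiser $\mathring{T}_n^{w_s}$ is recovered from the unique polynomial $T_n^{w_{s-1}}$ by the explicit antidifferentiation above, so $\mathring{T}_n^{w_s}$ is itself unique.
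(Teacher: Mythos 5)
Your first half is sound and matches the paper: writing $f=w_{s+1}\mathring{T}_n^{w_{s+1}}$, noting $f'=(s+n+1)w_sP$ with $P$ monic of degree $n$, and invoking the Erd\H{o}s--Lax type bound for powers with all zeros on $\T$ gives $\|w_sT_n^{w_s}\|_\T\leq\|w_sP\|_\T\leq\tfrac12\|w_{s+1}\mathring{T}_n^{w_{s+1}}\|_\T$, exactly as in the paper's chain of inequalities. The genuine gap is the reverse bound. You propose to exhibit the primitive $q$ (normalised so that $(w_{s+1}q)'=(s+n+1)w_sT_n^{w_s}$) as a competitor in \eqref{eq:fixed_zeros}, but this requires knowing that \emph{all zeros of $q$ lie on $\T$}, and you offer only the remark that this ``should be extracted from the extremal structure of $T_n^{w_s}$, plausibly via a Kolmogorov-type characterisation''. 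That is not a proof, and it is not a minor omission: the statement that the primitive of $w_sT_n^{w_s}$ with an $(s+1)$-fold zero at $1$ has all its zeros on $\T$ is essentially the content of the theorem itself (the paper records it only as a numerically observed phenomenon that motivated the result), so your plan is close to circular at the decisive step.

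The paper avoids this by building the competitor differently. Since all zeros of $T_n^{w_s}$ lie in $\overline{\D}$ (a standard consequence of the convexity of $\overline{\D}$), the P\'olya--Szeg\H{o}-type maximum-principle lemma (Lemma~\ref{lem:polya_szego_zeros}) shows that $zT_n^{w_s}(z)-(T_n^{w_s})^\ast(z)=(z-1)P(z)$ has \emph{all} its zeros on $\T$ automatically, while on $\T$ one has $|zT_n^{w_s}(z)-(T_n^{w_s})^\ast(z)|\leq 2|T_n^{w_s}(z)|$ because $|(T_n^{w_s})^\ast|=|T_n^{w_s}|$ there. This yields an admissible competitor for \eqref{eq:fixed_zeros} with $\|w_{s+1}P\|_\T\leq 2\|w_sT_n^{w_s}\|_\T$, closing the chain
\[\|w_sT_n^{w_s}\|_\T\leq\tfrac12\|w_{s+1}\mathring{T}_n^{w_{s+1}}\|_\T\leq\tfrac12\|w_{s+1}P\|_\T\leq\|w_sT_n^{w_s}\|_\T,\]
after which equality throughout, uniqueness of $T_n^{w_s}$, and integration give \eqref{eq:lsv_derivative}, \eqref{eq:weight_norm_comparison_general}, and the uniqueness of $\mathring{T}_n^{w_s}$ for $s\geq1$ (your final uniqueness-by-antidifferentiation remark is fine once the identity is established). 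To repair your argument you would need to supply a reciprocal-type construction of this kind, or some other independent proof that a suitable competitor with zeros on $\T$ has weighted norm at most $2\|w_sT_n^{w_s}\|_\T$; the Kolmogorov-criterion route you sketch is not developed and, as written, does not yield the zero-location claim for $q$.
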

 This extends work done in \cite{lsv79} where the authors show that Theorem \ref{thm:generalised_LSV} is valid for integer values of $s$. By using a result of Bernstein for determining norm asymptotics of weighted Chebyshev polynomials on an interval, see \cite{bernstein30-31, cer23}, we can compute the asymptotic behaviour of $\|w_sT_n^{w_s}\|_\T$ as $n\rightarrow \infty$. From \cite[Theorem 13]{nsz21}, one can deduce that $\|w_sT_n^{w_s}\|_\T\geq 1$ for any combination of $n$ and $s$. The sequence $\{T_n^{w_s}\}_{n\geq 1}$ asymptotically saturates this theoretical lower bound.
\begin{theorem}
    For any $s\geq 0$ the sequence $n\mapsto \|w_sT_n^{w_s}\|_{\T}$ is monotonically decreasing and
    \[\lim_{n\rightarrow \infty}\|w_sT_n^{w_s}\|_{\T} = 1.\]
    \label{thm:norm_limits}
\end{theorem}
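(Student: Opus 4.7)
Monotonicity falls out of minimality in one line: the polynomial $zT_n^{w_s}(z)$ is monic of degree $n+1$ and, since $|z|=1$ on $\T$, satisfies $\|w_s\,zT_n^{w_s}\|_\T=\|w_s T_n^{w_s}\|_\T$; hence $\|w_s T_{n+1}^{w_s}\|_\T\le \|w_s T_n^{w_s}\|_\T$. Together with the quoted bound $\|w_s T_n^{w_s}\|_\T\ge 1$, the sequence converges to some limit $L\ge 1$ and the real task is to prove $L\le 1$.

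For the upper bound I would use Theorem \ref{thm:generalised_LSV} to pass from the unconstrained problem to the constrained one, $\|w_s T_n^{w_s}\|_\T=\tfrac12\|w_{s+1}\mathring T_n^{w_{s+1}}\|_\T$, and then translate the constrained problem on $\T$ into an unconstrained weighted Chebyshev problem on $[-1,1]$. Because $|w_{s+1}|$ is invariant under $z\mapsto \bar z$ and $\mathring T_n^{w_{s+1}}$ is unique by Theorem \ref{thm:generalised_LSV}, its zeros are conjugation-symmetric, arranging themselves in pairs $\{e^{\pm i\alpha_k}\}$ together with a single real zero at either $1$ or $-1$ when $n$ is odd. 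Using the identities $|(e^{i\theta}-e^{i\alpha})(e^{i\theta}-e^{-i\alpha})|=2|\cos\theta-\cos\alpha|$ and $|e^{i\theta}\mp 1|^{2}=2(1\mp\cos\theta)$, the substitution $x=\cos\theta$ yields
\[\|w_{s+1}\mathring T_n^{w_{s+1}}\|_\T=2^{(n+s+1)/2}\,\|V\,T_m^V\|_{[-1,1]},\qquad m=\lfloor n/2\rfloor,\]
where $V(x)=(1-x)^{(s+1)/2}$ for $n$ even, while for odd $n$ the weight acquires an additional factor $(1+x)^{1/2}$ or has its exponent bumped to $(s+2)/2$, according to whether the extra real zero lies at $-1$ or at $1$. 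The alternation characterisation guarantees that $T_m^V$ has all its zeros in $(-1,1)$, so the constraint becomes vacuous on passing to the interval.

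Finally, I would invoke Bernstein's asymptotic formula for weighted Chebyshev polynomials on $[-1,1]$ with Jacobi-type weights, as in \cite{bernstein30-31, cer23}, namely
\[2^{m}\|V\,T_m^V\|_{[-1,1]}\longrightarrow 2\exp\!\left(\frac{1}{\pi}\int_{-1}^{1}\frac{\log V(x)}{\sqrt{1-x^2}}\,dx\right)\quad\text{as } m\to\infty.\]
The classical evaluation $\int_{-1}^{1}\log(1\pm x)/\sqrt{1-x^2}\,dx=-\pi\log 2$ renders the right-hand side a pure power of $2$; a direct accounting shows that in each of the three parity cases this power combines with $2^{(n+s+1)/2}$, the $2^{-m}$ from the Chebyshev asymptotic, and the $\tfrac12$ from Theorem \ref{thm:generalised_LSV} to give exactly $1$, so $\|w_s T_n^{w_s}\|_\T\to 1$. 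The main obstacles I anticipate are (i) ruling out more exotic conjugation-symmetric zero configurations and checking both odd-$n$ sub-cases cleanly, and (ii) verifying that Bernstein's formula applies to the algebraically vanishing weights in play---this is precisely the regime covered by \cite{cer23}.
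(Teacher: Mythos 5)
Your argument is essentially the paper's: the same one-line monotonicity observation ($\|w_sT_n^{w_s}\|_\T=\|z\,w_sT_n^{w_s}\|_\T\geq\|w_sT_{n+1}^{w_s}\|_\T$), followed by Theorem \ref{thm:generalised_LSV}, the conjugation-symmetry reduction to Jacobi-weighted Chebyshev polynomials on $[-1,1]$ (which is precisely the paper's Theorem \ref{thm:jacobi}, re-derived in sketch rather than cited), and Bernstein's asymptotic formula combined with the evaluation $\frac{1}{\pi}\int_{-1}^{1}\log|x\pm1|\,(1-x^2)^{-1/2}dx=-\log 2$. Leaving the location of the extra real zero in the odd case unresolved is harmless for this theorem, since both sub-cases produce the same power of $2$ in the Bernstein asymptotics (the paper's Theorem \ref{thm:jacobi} does pin it down at $z=-1$, but that is only needed elsewhere).
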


\subsection{Why moving the zeros to the boundary matters}

Our interest in extending Theorem \ref{thm:generalised_LSV} from integer values of $s$ to arbitrary $s>0$ originates from a study of Chebyshev polynomials with respect to the lemniscatic family 
\begin{equation}
    \sfE_m = \{z: |z^m-1| = 1\},\quad m\in \N.
    \label{eq:lemniscate}
\end{equation}
For a graphical representation of these sets, see Figure \ref{fig:lemniscates} below.

\begin{figure}[h!]
\begin{subfigure}{.33\textwidth}
  \centering
  \includegraphics[width=\linewidth]{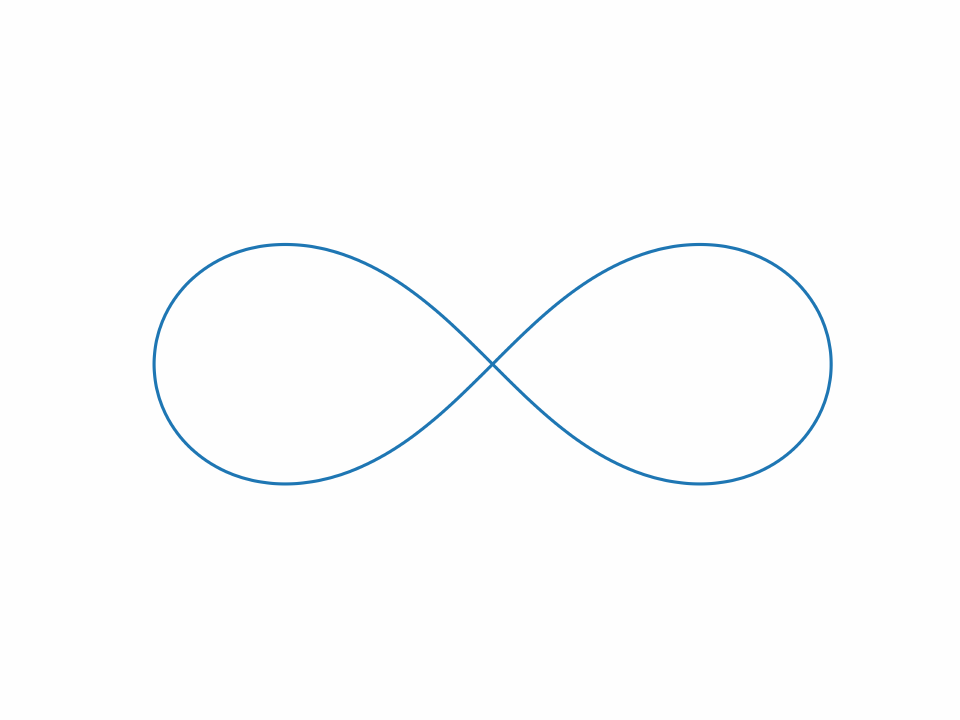}
  \caption{$\sfE_2$}
  \label{fig:lem2}
\end{subfigure}%
\begin{subfigure}{.33\textwidth}
  \centering
  \includegraphics[width=\linewidth]{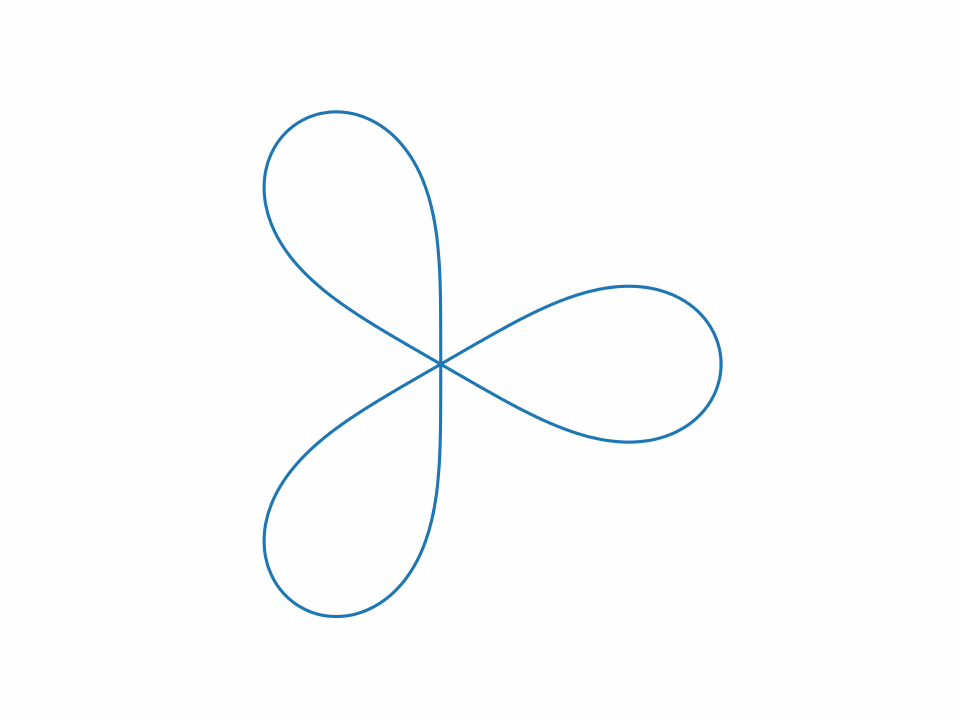}
  \caption{$\sfE_3$}
  \label{fig:lem3}
\end{subfigure}
\begin{subfigure}{.33\textwidth}
  \centering
   \includegraphics[width=\linewidth]{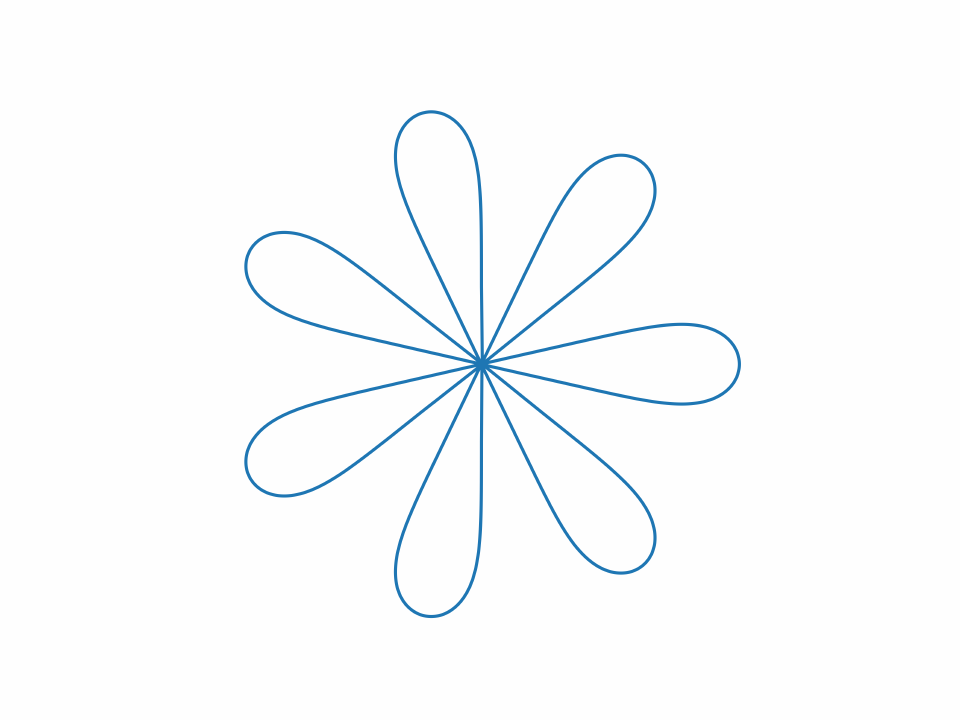}
  \caption{$\sfE_7$}
  \label{fig:lem7}
\end{subfigure}
    \caption{Examples of sets in the family $\{\sfE_m\}$.}
    \label{fig:lemniscates}
\end{figure}

With the notation $T_{nm+l}^{\sfE_m}$, we denote the corresponding Chebyshev polynomial of degree $nm+l$ where $0\leq l <m$. Due to the symmetry of the sets $\sfE_m$, one finds that $z\in \sfE_m$ if and only if $ze^{2\pi ik/m}\in \sfE_m$ for $k\in \mathbb{Z}$. By uniqueness of the Chebyshev polynomial $T_{nm+l}^{\sfE_m}$, this symmetry of the underlying set means that
\begin{equation}
    \label{eq:symmetry_lemniscate}
    e^{2\pi ik l/m}T_{nm+l}^{\sfE_m}(ze^{2\pi ik/m}) = T_{nm+l}^{\sfE_m}(z).
\end{equation}
In other words, $T_{nm+l}^{\sfE_m}$ is a polynomial in $z^{m}$ multiplied with a factor of $z^l$. This explains the choice of degrees to be $nm+l$. By changing the perspective via the transformation $z^m-1 = \zeta\in \T$, one sees that \eqref{eq:symmetry_lemniscate} implies that
\begin{equation}
    T_{nm+l}^{\sfE_m}(z) = (\zeta+1)^{l/m}\prod_{k=1}^{n}(\zeta-a_k) = w_{l/m}(\zeta)T_n^{w_{l/m}}(\zeta)
    \label{eq:lemniscate_cheb}
\end{equation}
where the right-hand side minimises the deviation from zero on the unit-circle. Hence we are led to consider $T_n^{w_{l/m}}$ in order to draw further conclusions on the behaviour of Chebyshev polynomials corresponding to this family of lemniscates. 

As a consequence of Szeg\H{o}'s inequality, see e.g. \cite{szego24} and \cite[Theorem 5.5.4]{ransford95}, one finds that $\|T_{nm+l}^{\sfE_m}\|_{\sfE_m}\geq 1$. This implies that the subsequence $T_{nm}^{\sfE_m}$ is explicitly given by
\begin{equation}
    \label{eq:mnth_degree_cheb}
    T_{nm}^{\sfE_m}(z) = (z^m-1)^n.
\end{equation}
Indeed, these polynomials trivially saturate the lower bound $\|T_{nm}^{\sfE_m}\|_{\sfE_m} = 1$ for any value of $n$. As is evident by \eqref{eq:mnth_degree_cheb}, the zeros of $T_{nm}^{\sfE_m}$ are fixed at the $m$th roots of unity.

For the remaining Chebyshev polynomials $T_{nm+l}^{\sfE_m}$,  $l\in \{1,\dotsc,m-1\}$, very little is known. It is not immediately clear from classical theory what the asymptotic behaviour of $\|T_{nm+l}^{\sfE_m}\|_{\sfE_m}$ should be when $n\rightarrow \infty$. The same lack of understanding persists for the corresponding zero distributions of $T_{nm+l}^{\sfE_m}$.

Extremal polynomials associated with the sets $\sfE_m$ for $m\in \N$ have a long history of study. Already Ullman showed that the zeros of the so-called Faber polynomials for odd degrees corresponding to $\sfE_2$, distribute ``conformally'' about the bounding curve, see \cite{ullman59}. In effect, Ullman's result says that the normalised zero distribution corresponding to the sequence of odd-degree Faber polynomials converge weakly to the equilibrium measure of $\sfE_2$. He obtained this result by first transforming the domain to the unit disk, similar to the transformation in \eqref{eq:lemniscate_cheb}. Peherstorfer and Steinbauer established a connection between different classes of weighted $L^q$ extremal polynomials, $q\in [1,\infty)$, on circles and $\sfE_m$, again using the same change of variables as in \eqref{eq:lemniscate_cheb}, see \cite[Proposition 6]{ps99}. Mi\~{n}a D\'{i}az, in his doctoral thesis, considered Bergman-Carleman polynomials on the closely related sets $\{z: |z^m-1|\leq r\}$ for $r>1$ and determined their complete asymptotical behaviour. In particular, he established that the resulting normalised zero distributions converge weakly to equilibrium measure on $\sfE_m$ for certain subsequences, see \cite[Theorem IV.1.1]{minadiaz06}. The asymptotics for the Bergman-Carleman polynomials corresponding to $\{z:|z^m-1|<r\}$ where $r<1$ were determined by Gustafsson et al., see \cite[Proposition 7.3]{gpss09}.  We return to considerations of Chebyshev polynomials corresponding to the lemniscatic family $\{\sfE_m\}$ in the final section.

In the unweighted case, the Chebyshev polynomials of the unit circle are simply given by $z^n$. That is to say, $T_n^{w_0}(z) = z^n$ which satisfies $\|T_n^{w_0}\|_{\T} = 1$ for all $n$. Interestingly, Chebyshev polynomials corresponding to a weight function consisting of a single prescribed weighted zero on the boundary radically changes the behaviour of the minimiser. 

For $s\geq 0$ and $n\in \N$, we introduce the family of measures
\begin{equation}
    \nu_{n,s} = \frac{1}{n}\sum_{\left\{z\,:\, T_n^{w_s}(z) = 0\right\}}\delta_z
    \label{eq:counting_measure}
\end{equation}
where $\delta_z$ is the Dirac measure at the point $z$ and the sum is taken over all zeros of $T_n^{w_s}$ counting multiplicity. It is clear that $\nu_{n,s}$ is a probability measure for all values of $s$ and $n$. While the zeros of the unweighted polynomials $T_n^{w_0}$ are all situated at $z = 0$, we have the following result for $s = 1$.

\begin{theorem}
    \label{thm:zero_limits}
    With $\nu_{n,s}$ defined as in \eqref{eq:counting_measure} and $0<r<1$, we have
    \[\nu_{n,1}\Big(\{z:|z|\leq r\}\Big)\rightarrow 0\]
    as $n\rightarrow \infty$. Moreover, $\nu_{n,1}$ converges in the weak-star sense to $\frac{d\theta}{2\pi}$ on $\T$ as $n\rightarrow \infty$.
\end{theorem}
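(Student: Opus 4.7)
The plan is to use Theorem~\ref{thm:generalised_LSV} to realize the zeros of $T_n^{w_1}$ as critical points of a polynomial whose zeros already lie on $\T$, and then to propagate the equidistribution from the zeros to the critical points. Taking $s=1$ in Theorem~\ref{thm:generalised_LSV} and setting $Q_n := w_2 \mathring{T}_n^{w_2}$, we obtain $w_1 T_n^{w_1} = \tfrac{1}{n+2} Q_n'$, so the $n+1$ zeros of $w_1 T_n^{w_1}$---the point $z=1$ together with the zeros counted by $\nu_{n,1}$---are precisely the critical points of $Q_n$. Now $Q_n$ is monic of degree $n+2$ with all its zeros on $\T$: a double zero at $1$ from the factor $(z-1)^2$, plus the $n$ zeros of $\mathring{T}_n^{w_2}$ constrained to $\T$ by the very definition in \eqref{eq:fixed_zeros}. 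The Gauss--Lucas theorem places these critical points inside $\overline{\D}$, and Theorems~\ref{thm:generalised_LSV} and~\ref{thm:norm_limits} combine to give $\|Q_n\|_\T = 2\|w_1 T_n^{w_1}\|_\T \to 2$.

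Next I would show that the normalized zero-counting measure $\tilde{\mu}_n$ of $Q_n$ converges weak-star to $d\theta/(2\pi)$. Writing $Q_n(z) = \sum_{j=0}^{n+2} a_{j,n} z^j$, Parseval's identity together with $\|Q_n\|_{L^2(\T)} \leq \|Q_n\|_\T$ yields $\sum_j |a_{j,n}|^2 \leq 4 + o(1)$; since $a_{n+2,n} = 1$, every coefficient is bounded uniformly in $n$ and $j$. Each $|a_{n+2-k,n}|$ is the modulus of the $k$th elementary symmetric function of the zeros $\zeta_{j,n}$ of $Q_n$, so Newton's identities propagate this bound to the power sums $S_k^{(n)} = \sum_j \zeta_{j,n}^k$, which remain bounded by constants depending only on $k$. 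For any fixed $k \neq 0$ this gives $S_k^{(n)}/(n+2) \to 0$; using that the $\zeta_{j,n}$ lie on $\T$, these are the Fourier coefficients of $\tilde{\mu}_n$, so $\tilde{\mu}_n \to d\theta/(2\pi)$ weak-star.

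The main obstacle is to transfer this equidistribution from the zeros of $Q_n$ to its critical points. The weak-star convergence $\tilde{\mu}_n \to d\theta/(2\pi)$ is equivalent to the $L^1_{\mathrm{loc}}(\C)$ convergence $\tfrac{1}{n+2}\log|Q_n(z)| \to \log^{+}|z|$ of potentials. Using the logarithmic derivative identity $Q_n'/Q_n = \sum_j (z - \zeta_{j,n})^{-1}$, the fact that $\zeta_{j,n} \in \T$, and estimates on the Cauchy transform of $\tilde{\mu}_n$ off $\T$, one upgrades this to the corresponding convergence $\tfrac{1}{n+1}\log|Q_n'(z)| \to \log^{+}|z|$ in $L^1_{\mathrm{loc}}(\C)$, which yields weak-star convergence of the critical point measures of $Q_n$ to $d\theta/(2\pi)$. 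Since those critical point measures equal $\tilde{\nu}_n^{\mathrm{crit}} = \tfrac{1}{n+1}\delta_1 + \tfrac{n}{n+1}\nu_{n,1}$ and $\tfrac{1}{n+1}\delta_1 \to 0$ weak-star, we conclude $\nu_{n,1} \to d\theta/(2\pi)$. The first assertion then follows from the portmanteau theorem applied to the closed set $\{z \in \C : |z| \leq r\}$, which carries zero mass under $d\theta/(2\pi)$ whenever $r < 1$.
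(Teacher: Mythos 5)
Your reduction and your equidistribution argument for $Q_n := w_2\mathring{T}_n^{w_2}$ are sound: $Q_n$ is monic of degree $n+2$ with all zeros on $\T$, $\|Q_n\|_\T = 2\|w_1T_n^{w_1}\|_\T \to 2$, and the coefficient bound via Parseval together with Newton's identities does show that the zero-counting measures of $Q_n$ converge weak-star to $\frac{d\theta}{2\pi}$. The genuine gap is the last step, the transfer from the zeros of $Q_n$ to its critical points. The "upgrade" from $\frac{1}{n+2}\log|Q_n|\to\log^{+}|z|$ to $\frac{1}{n+1}\log|Q_n'|\to\log^{+}|z|$ in $L^1_{\mathrm{loc}}$ does not follow from the logarithmic derivative identity and Cauchy-transform estimates: those only give the upper bound $\limsup_n \frac{1}{n}\log|Q_n'(z)|\le \log^{+}|z|$, while what is needed is a \emph{lower} bound on $|Q_n'|$ at interior points, i.e.\ the statement that the critical points do not collapse into $\D$ --- and that is exactly the hard content of the theorem. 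The information you have extracted (monic, all zeros on $\T$, norm tending to $2$, zeros equidistributing) is provably insufficient: $Q(z)=z^{n+2}-1$ satisfies all of it, yet every critical point sits at the origin, so its critical-point measure is $\delta_0$, not $\frac{d\theta}{2\pi}$. Worse, this is not an artificial example in the present context: by Theorem~\ref{thm:generalised_LSV} with $s=0$ one has $w_1\mathring{T}_n^{w_1}(z)=z^{n+1}-1$ and $T_n^{w_0}(z)=z^n$, so the exact analogue of your transfer step fails one level down ($\nu_{n,0}=\delta_0$). Hence any proof must use $s$-specific information about $\mathring{T}_n^{w_2}$ beyond the three properties you invoke.

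This missing lower bound is precisely what the paper supplies, by a different and necessarily more explicit route: it shows $|w_1(0)T_n^{w_1}(0)|^{1/n}\to 1$ at the single interior point $z=0$ (equivalently $|Q_n'(0)|^{1/n}\to 1$), and then cites \cite[Theorem III.4.1]{st97} to conclude weak-star convergence of $\nu_{n,1}$ to the equilibrium measure $\frac{d\theta}{2\pi}$. The computation of $T_n^{w_1}(0)$ rests on Theorem~\ref{thm:jacobi}: for $s=2$ the interval polynomials $(x-1)T_m^{(1,0)}$ and $(x-1)T_m^{(1,\frac12)}$ are identified with suitably rescaled Chebyshev polynomials of the first and third kind, whence $|T_n^{w_1}(0)|\asymp n^{-2}$, which decays too slowly to affect the $n$th root. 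If you want to complete your argument, you would need to prove an estimate of this quality for $Q_n'$ somewhere in $\D$; the equidistribution of the zeros of $Q_n$ alone cannot deliver it.
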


Similar behaviour regarding the zero placement for $T_n^{w_s}$ have been observed numerically for other values of $s>0$ and is something that will be further discussed in the concluding section. This should be compared with the Theorem in \cite{st90} stating that for the closure of a Jordan domain $\sfE$, the zeros of the corresponding Chebyshev polynomials $T_n^{\sfE}$ stay away from the boundary if and only if the bounding curve of $\sfE$ is analytic. Although $\T$ is certainly an analytic curve, Theorem \ref{thm:zero_limits} shows that prescribing a zero on the boundary completely changes the behaviour of the zero placement of the corresponding Chebyshev polynomials. In this case, we see that every point of $\T$ is a limit point of zeros of $T_n^{w_s}$.

%It is illuminating to consider the case of an analytic boundary even further.

%Another apparent difference, that appears when considering Chebyshev polynomials corresponding to weights with a prescribed weighted zero, lies in the behaviour of the associated norms. Let $\sfE$ denote the closure of a Jordan domain bounded by an analytic curve. 

Another apparent difference is exhibited in terms of the corresponding norm behaviour. Let again $\sfE$ denote the closure of a Jordan domain, such that the associated exterior Riemann map $\Phi:\C\setminus \overline{\D}\rightarrow \C\setminus \sfE$ is of the form $\Phi(z) = z+O(1)$ as $z\rightarrow \infty$ which extends conformally to $\{z:|z|>r\}$ for some $r\in (0,1)$. The existence of such a value $r$ is the same as saying that the bounding curve is analytic. In this case, it was shown by Faber \cite{faber20} that the associated Chebyshev polynomials $T_n^{\sfE}$ satisfy
\begin{equation}
    \|T_n^{\sfE}\|_{\sfE} = 1+o(r^n),\quad n\rightarrow \infty.
    \label{eq:Faber_convergence}
\end{equation}
It is a consequence of the aforementioned inequality of Szeg\H{o} that the conditions on $\Phi$ implies that $\|T_n^{\sfE}\|_{\sfE}\geq 1$ so in this case the theoretical minimum of norms is asymptotically attained.

Even though $\Phi(z) = z$ in the case where $\sfE = \overline{\D}$, we conclude from \eqref{eq:halasz_sol} that
\[\|w_1T_{n-1}^{w_1}\|_{\T} = \mu_n = 1+\frac{\pi^2}{8n}+O(1/n^2)\]
implying that the convergence is significantly slower if a zero is prescribed on the boundary. 

Totik and Varj\'{u} investigated the effects of adding several prescribed zeros on the unit circle, see \cite{tv07}. It is a remarkable result that there exists a universal constant $c>0$ such that if $n_1,n_2$ are integers then
\[\max_{z\in \T}\left|\prod_{k=1}^{n_1}(z-e^{i\alpha_k})\prod_{k=1}^{n_2}(z-a_k)\right|\geq 1+c\cdot \frac{n_1}{n_1+n_2}\]
for any choice of values $\alpha_k\in \R$ and $a_k\in \C$. For further results concerning the addition of several prescribed zeros to Chebyshev polynomials, we refer the reader to \cite{ab10,totik16}.

The main tool used in \cite{lsv79} to prove \eqref{eq:lsv_derivative} for $s\in \N$ is the classical Erd\H{o}s--Lax inequality which holds for polynomials. An extension of this inequality to the present setting constitutes the biggest obstacle in proving Theorem \ref{thm:generalised_LSV}. 

\subsection{A result of Erd\H{o}s--Lax type}

As $k$ ranges between $1,\dotsc,N$, let $\alpha_k\in [0,2\pi)$ and $s_k>0$. We define
\begin{equation}
    f(z) = c\prod_{k=1}^{N} (z-e^{i\alpha_k})^{s_{k}}, \quad z \in \mathbb{D}
    \label{eq:f}
\end{equation}
where the branches are chosen as to make $f$ analytic in $\mathbb{D}$.
The function $f$ extends continuously to the unit circle. Additionally, if $s_k\geq 1$ for all $k\in \{1,\dotsc,N\}$ then the derivative $f'$ also extends continuously to the unit circle. 

With the purpose of proving Theorem \ref{thm:generalised_LSV} we show the following Erd\H{o}s--Lax type equality.
\begin{theorem}\label{thm:erdos_lax_circle}
    Let $f$ be as in \eqref{eq:f} with $s_k\geq 1$. Then
    \begin{equation*}
        \norm{f'}_{\T} = \frac{\sum_{k=1}^{N} s_{k}}{2} \norm{f}_{\T}.
    \end{equation*}
\end{theorem}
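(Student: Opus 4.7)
The plan is to reduce the equality $\norm{f'}_{\T} = (S/2) \norm{f}_{\T}$ (with $S := \sum_{k} s_{k}$) to a sharp pointwise formula for $|f'|$ on the circle, namely
\begin{equation*}
|f'(e^{i\theta})|^{2} = A'(\theta)^{2} + \left(\frac{S}{2}\right)^{2} A(\theta)^{2}, \qquad A(\theta) := |f(e^{i\theta})|,
\end{equation*}
valid for $\theta$ away from the zeros $\{\alpha_{k}\}$. To derive it, I would compute $zf'(z)/f(z)$ on $\T$ via the partial-fraction expansion $f'/f = \sum_{k=1}^{N} s_{k}/(z-e^{i\alpha_{k}})$ together with the elementary identity $e^{i\theta}/(e^{i\theta}-e^{i\alpha}) = 1/2 - (i/2)\cot((\theta-\alpha)/2)$. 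These give, for $\theta \notin \{\alpha_{k}\}$,
\begin{equation*}
\frac{zf'(z)}{f(z)}\Big|_{z=e^{i\theta}} = \frac{S}{2} - iR(\theta), \qquad R(\theta) := \frac{1}{2}\sum_{k=1}^{N} s_{k} \cot\!\left(\tfrac{\theta-\alpha_{k}}{2}\right),
\end{equation*}
so in particular $\operatorname{Re}(zf'/f) \equiv S/2$ on $\T$. Writing $g(\theta) = f(e^{i\theta})$, so $|g'(\theta)| = |f'(e^{i\theta})|$, one has $g'/g = i(zf'/f) = R + iS/2$; its real part equals $(\log A)'$, so $R = A'/A$, and squaring the modulus gives the displayed identity.

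The lower bound is now immediate: at a maximiser $\theta^{\ast}$ of $A$ one has $A'(\theta^{\ast}) = 0$ and $A(\theta^{\ast}) = \norm{f}_{\T}$, so the identity yields $|f'(e^{i\theta^{\ast}})| = (S/2)\norm{f}_{\T}$ and hence $\norm{f'}_{\T} \geq (S/2)\norm{f}_{\T}$.

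The matching upper bound, in view of the pointwise identity, is equivalent to the Bernstein-type inequality $A'(\theta)^{2} \leq (S/2)^{2}\bigl(\norm{f}_{\T}^{2} - A(\theta)^{2}\bigr)$. For integer $s_{k}$, $f$ is an honest polynomial of degree $S$ with every zero on $\T$, and this is precisely the classical Erd\H{o}s--Lax inequality (in its equality case). For general $s_{k} \geq 1$, I would approximate by rational exponents $s_{k} = p_{k}/q$ and apply classical Erd\H{o}s--Lax to the polynomial $f^{q}$ of degree $qS$, obtaining $\norm{f^{q-1}f'}_{\T} = (S/2)\norm{f}_{\T}^{q}$; a continuity argument in the parameters $(s_{1},\ldots,s_{N})$---justified by the hypothesis $s_{k} \geq 1$, which ensures that $f'$ extends continuously to $\overline{\D}$---then passes the inequality to real exponents. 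The main obstacle is the implication from classical Erd\H{o}s--Lax on $f^{q}$ to the pointwise bound on $|f'|$: the classical statement controls the product $|f|^{q-1}|f'|$ rather than $|f'|$ alone, so one must argue that the extremum of this product is forced to occur at a maximiser of $|f|$. This is precisely the ``broadening of the Erd\H{o}s--Lax inequality to encompass powers of polynomials'' announced in the abstract, and I expect it to absorb the bulk of the technical work.
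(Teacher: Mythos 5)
Your pointwise identity $|f'(e^{i\theta})|^{2} = A'(\theta)^{2} + (S/2)^{2}A(\theta)^{2}$ is correct (it is just the statement that $\mathrm{Re}(zf'/f) \equiv S/2$ on $\T$ when all zeros are unimodular, the same computation as the paper's Tur\'an-type Lemma), and it does give the lower bound $\norm{f'}_{\T} \geq (S/2)\norm{f}_{\T}$ cleanly, since $A' = 0$ at a maximiser of $A$. But the proposal does not prove the upper bound, which is the entire content of the theorem for non-integer $s_k$. Via your identity, the pointwise inequality $A'(\theta)^{2} \leq (S/2)^{2}\bigl(\norm{f}_{\T}^{2} - A(\theta)^{2}\bigr)$ is \emph{equivalent} to the upper bound $\norm{f'}_{\T} \leq (S/2)\norm{f}_{\T}$, so the reduction is circular in difficulty; and the route you sketch for rational exponents---apply classical Erd\H{o}s--Lax to $f^{q}$---only controls $\norm{f^{q-1}f'}_{\T}$, which says nothing about $\norm{f'}_{\T}$ unless one shows that the maximum of $|f'|$ occurs where $|f|$ is maximal. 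You flag this yourself as ``the main obstacle,'' but that obstacle is precisely the theorem: no argument is offered for it, and it is not a consequence of the classical polynomial results you invoke.

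For comparison, the paper closes exactly this gap by adapting Szeg\H{o}'s argument at a maximiser $z_{0}$ of $|f'|$ (not of $|f|$): stationarity of $\theta\mapsto|f'(z_{0}e^{i\theta})|^{2}$ forces $z_{0}f''(z_{0})/f'(z_{0})\in\R$, and explicit computations with $P=f^{m}$, $P'$, $P''$ show that either $\mathrm{Im}\sum_{k} z_{0}/(z_{0}-c_{k})=0$, in which case $|f'(z_{0})| = \tfrac{S}{2}|f(z_{0})| \leq \tfrac{S}{2}\norm{f}_{\T}$, or else $(f')^{m}$ attains equality in Bernstein's inequality, forcing the exceptional form $f(z)=\tilde c\,(z^{S}+e^{i\alpha})$, which is checked directly; the case $P(z_{0})=0$ is handled separately, and only then is the rational-to-real density step invoked. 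Some argument of this kind, locating the maximum of $|f'|$ relative to that of $|f|$, is what your proposal is missing; as it stands, only the (easier) lower bound and the already-classical integer case are established.
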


The classical Erd\H{o}s--Lax inequality was first proven in \cite{lax44}, see Section \ref{section:el} for further details. Theorem \ref{thm:erdos_lax_circle} shows that the case of equality is not specific to polynomials; the result can be suitably extended to allow for powers of polynomials. We believe that Theorem \ref{thm:erdos_lax_circle} is of independent interest and therefore also present related consequences that our result provides. In particular, we prove a generalisation of the classical Erd\H{o}s--Lax inequality which allows for the function $f$ to have zeros away from the unit circle in addition to the zeros on the boundary.

Since Lax's proof several articles on the Erd\H{o}s--Lax inequality have been written. Some of these have provided new ways of proof and others have generalised the result, see e.g. \cite{az80, malik69}. However, to the authors' knowledge, none have considered non-integer powers of polynomials as in Theorem \ref{thm:erdos_lax_circle}. 

\subsection{Outline}
In Section \ref{section:el} we prove Theorem \ref{thm:erdos_lax_circle} and discuss how it relates to classical results for polynomials. We further extend the equality of Theorem \ref{thm:erdos_lax_circle} to an inequality which allows for zeros of the functions to lie outside the closed unit disk. As a consequence, we show that the classical Erd\H{o}s--Lax inequality can be extended beyond the setting of polynomials to powers of polynomials.

In Section \ref{section:lsv} we use Theorem \ref{thm:erdos_lax_circle} to extend the results by Lachance, Saff and Varga \cite{lsv79} and prove Theorems \ref{thm:generalised_LSV} and \ref{thm:norm_limits}. As an intermediate step we relate the Chebyshev polynomials corresponding to the weight $w_s$ on the unit circle to Chebyshev polynomials corresponding to Jacobi weights on the interval. We end this section with a proof of Theorem \ref{thm:zero_limits}.

In Section \ref{sec:conclusion} we discuss how the results for Chebyshev polynomials corresponding to a weight with a prescribed zero can be applied to draw conclusions for Chebyshev polynomials on lemniscates. We further present numerical experiments indicating that Theorem \ref{thm:zero_limits} should be valid for all positive values of the parameter $s$. In other words, it appears that the normalised zero counting measure $\nu_{n,s}$ defined in \eqref{eq:counting_measure} converges weak-star as $n\rightarrow \infty$ to normalised Lebesgue measure on $\T$ for all values of $s>0$. Finally, we present a version of the generalised Erd\H{o}s--Lax inequality that we believe should hold true.

\begin{section}{An Erd\H{o}s--Lax type equality for powers of polynomials}
\label{section:el}
    If $P$ is a polynomial of degree $n$, then Bernstein showed that
\begin{equation} 
\label{eq:bernstein}
\|P'\|_{\T}\leq n\|P\|_{\T}
\end{equation}
with equality if and only if $P(z) = cz^n$ for some constant $c\in \mathbb{C}$, see e.g. \cite{bernstein12, qz19}. Erd\H{o}s suggested that this inequality can be sharpened in the case where $P$ is zero free on $\mathbb{D}$. Specifically, he conjectured that the sharp inequality in this case is given by
\begin{equation} 
\label{eq:erdos-lax}
\|P'\|_{\T}\leq \frac{n}{2}\|P\|_{\T}.
\end{equation}
This was later proven to be correct by Lax \cite{lax44} using techniques developed by P\'{o}lya and Szeg\H{o}. P\'{o}lya and Szeg\H{o} verified the special case of equality in \eqref{eq:erdos-lax} when the zeros of $P$ are situated on $\mathbb{T}$. 
Lax was able to lift this result using reciprocal polynomials to prove \eqref{eq:erdos-lax}. We should mention that preceding Lax's proof of \eqref{eq:erdos-lax}, Tur\'{a}n \cite{turan39} showed that if $P$ is a polynomial which is zero free on $\mathbb{C}\setminus \mathbb{D}$ then
\begin{equation}
    \|P'\|_{\T} \geq \frac{n}{2}\|P\|_{\T}.
    \label{eq:turan}
\end{equation}

Here we generalise the case of equality in \eqref{eq:erdos-lax} that was proven by P\'{o}lya and Szeg\H{o} to the setting where, instead of a polynomial $P$, one considers the function $f = P^{1/m}$. We show that one can replace the degree $n$ in the equation with the number $n/m$. This will establish Theorem \ref{thm:erdos_lax_circle} in the case where all the values of $s_k$ are rational numbers. The fact that the result is valid for all values of $s_k\geq 1$ is a consequence of the rational numbers being dense among the real numbers.

% Several generalisations of the Erd\H{o}s--Lax inequality exists for the case of polynomials. See, for example, \cite{Malik, MirHussainWagay, AzizMohammad}. In particular, we mention the $L^{p}$ versions of the Erd\H{o}s--Lax inequality proved in \cite{RAHMAN198826} ($0 < p < \infty$) which were obtained as consequences of $L^{p}$ Bernstein estimates established by Arestov in \cite{arestov1981integral}. We remark that the results were obtained earlier in the case $1\leq p<\infty$. For such $p$ the Berstein estimate was given by Zygmund \cite{MR1576159} and the Erd\H{o}s--Lax estimate was given by de-Bruijn \cite{MR0023380}. However, the direction toward similar inequalities for rational powers of polynomials appear to be unexplored.

\subsection{Rational powers of polynomials}

With the intent of proving Theorem \ref{thm:erdos_lax_circle} we make a first reduction of the problem by noting that it is enough to consider rational values of $s_k$ by density. Denote by $P$ the polynomial
\begin{equation}
    \label{eq:polynomial}
    P(z) = c^m\prod_{k=1}^{N} (z-a_k)^{s_{k}m} = c^m\prod_{k=1}^{n}(z-c_k)
\end{equation}
where the $s_k\geq 1$ are rational numbers, and $m \geq 1$ is an integer such that $ms_{k} \in \mathbb{N}$ for all $k$. The $c_{k}$'s are the zeros of $P$ counting multiplicities. In particular, for every $k\in \{1,\dotsc,n\}$ there exists a $j\in \{1,\dotsc,N\}$ such that $c_k = a_j$. The reason for introducing both notations is that it will be advantageous to be able to refer to the different representations later. Note that the degree of $P$ is given by $n =m\sum_{k=1}^{N} s_{k}$. 

Let 
\begin{equation}\label{eq:form}
    f(z) = P(z)^{1/m} = c\prod_{k=1}^{N} (z-a_k)^{s_{k}}, \quad z \in \mathbb{D}
\end{equation}
which is a multi-valued function with single-valued absolute value. If all $|a_k|\geq 1$ then we choose a branch so as to make $f$ analytic in $\mathbb{D}$. The derivative of $f$ is given by
\begin{equation}\label{eq:derivative}
    f'(z)=\frac{1}{m}f(z)\frac{P'(z)}{P(z)} = \frac{f(z)}{m} \sum_{k=1}^{n} \frac{1}{z-c_{k}}.
\end{equation}
If all $c_k$ lie exterior to $\D$, then $f'$ is continuous in $\overline{\D}$.

    %In this section we prove Theorem \ref{thm:erdos_lax_circle} and consider some of its immediate consequences. 
    
    The restriction $s_k\geq 1$ is necessary. If this is not the case, then the function $f'$ becomes unbounded on $\mathbb{T}$ if $|a_k| = 1$ for some $k$. Secondly, note that if $f(z) = (z-a)^s$ with $|a|\geq 1$ and $s\geq 1$, then
	\[f'(z) = s(z-a)^{s-1},\]
	and thus $\|f'\|_\T = s(|a|+1)^{s-1}$. Since $\|f\|_\T = (|a|+1)^s$, this implies that
	\[\frac{\|f'\|_\T}{\|f\|_\T} = \frac{s}{|a|+1}\leq \frac{s}{2}\]
	with equality if and only if $|a|=1$. Also note that $f'$ achieves its maximum modulus in this case at the same point on the unit circle as $f$ does. This is something we will investigate further. 
 
 The key step in the proof of Theorem \ref{thm:erdos_lax_circle} below will be to show that there is a point $z_0\in \T$ such that 
    \[\|f'\|_{\T} = |f'(z_0)| = \frac{\sum_{k=1}^{N} s_k}{2}|f(z_0)|.\]
    This trivially implies that
    \[\|f'\|_{\T}\leq \frac{\sum_{k=1}^{N} s_k}{2}\|f\|_\T.\]
    We isolate the reverse inequality which is a generalisation of \eqref{eq:turan} from \cite{turan39} in a separate lemma since it is considerably easier to show.

     \begin{lemma}
        \label{lem:turan}
        Let $f(z) = c\prod_{k=1}^{N}(z-a_k)^{s_k}$ with $s_k\geq 1$ and $|a_k|\leq 1$. Then
        \[\|f'\|_{\T}\geq \frac{\sum_{k=1}^{N} s_k}{2}\|f\|_{\T}.\]
    \end{lemma}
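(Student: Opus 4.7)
The plan is to obtain a pointwise lower bound $|f'(z)| \geq \tfrac{1}{2}\bigl(\sum_k s_k\bigr)|f(z)|$ on $\T \setminus\{a_1,\dots,a_N\}$ and then evaluate at a point where $|f|$ attains its maximum on $\T$.

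First, I would exploit the logarithmic derivative. Since
\[
\frac{f'(z)}{f(z)} \;=\; \sum_{k=1}^{N} \frac{s_k}{z-a_k},
\]
multiplying by $z$ gives, for any $z \in \T \setminus\{a_1,\dots,a_N\}$,
\[
\left|\frac{f'(z)}{f(z)}\right| \;=\; \left|\frac{z\,f'(z)}{f(z)}\right| \;\geq\; \operatorname{Re}\!\sum_{k=1}^{N} \frac{s_k\, z}{z-a_k}.
\]

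Next, I would bound each real part from below using the classical identity that drives Tur\'an's inequality. For $|z|=1$ and $|a|\leq 1$,
\[
|z-a|^2 \;=\; 1 - 2\operatorname{Re}(\bar a z) + |a|^2 \;\leq\; 2\bigl(1-\operatorname{Re}(\bar a z)\bigr),
\]
and since $\operatorname{Re}\frac{z}{z-a} = \frac{1-\operatorname{Re}(\bar a z)}{|z-a|^2}$, this immediately yields $\operatorname{Re}\frac{z}{z-a}\geq \tfrac{1}{2}$. Summing with weights $s_k$ produces
\[
\left|\frac{f'(z)}{f(z)}\right| \;\geq\; \frac{1}{2}\sum_{k=1}^{N} s_k \qquad (z \in \T,\; f(z)\neq 0).
\]

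Finally, I would pick $z_0 \in \T$ with $|f(z_0)| = \|f\|_\T$. Unless $f$ is identically zero (in which case the lemma is trivial), the maximum modulus of the continuous function $|f|$ on $\T$ is strictly positive and therefore attained at a point where $f$ does not vanish, so the pointwise bound applies at $z_0$ and yields
\[
\|f'\|_\T \;\geq\; |f'(z_0)| \;\geq\; \frac{\sum_{k=1}^{N} s_k}{2}\,|f(z_0)| \;=\; \frac{\sum_{k=1}^{N} s_k}{2}\,\|f\|_\T.
\]
There is no real obstacle here beyond noticing that the Tur\'an-type real-part inequality depends only on the hypothesis $|a|\leq 1$ for each zero, not on integrality of the exponents $s_k$, so the argument passes through fractional powers without modification.
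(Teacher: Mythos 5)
Your proposal is correct and follows essentially the same route as the paper: both use the logarithmic derivative together with the pointwise bound $\operatorname{Re}\frac{z}{z-a_k}\geq \tfrac12$ for $|z|=1$, $|a_k|\leq 1$ (via the identity $\operatorname{Re}\frac{z}{z-a_k}=\frac{1-\operatorname{Re}(\overline{a_k}z)}{1-2\operatorname{Re}(\overline{a_k}z)+|a_k|^2}$), and then compare $|f'|$ with $|f|$ at a maximum point of $|f|$ on $\T$. Your extra remark that the bound applies because the maximum of $|f|$ is attained where $f\neq 0$ is a harmless elaboration of what the paper leaves implicit.
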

    \begin{proof}
        By differentiating, we obtain
        \[f'(z) = f(z)\sum_{k=1}^{N}\frac{s_k}{z-a_k}\]
        and the result follows if we can show that
        \[\left|\sum_{k=1}^{N}\frac{s_k}{z-a_k}\right|\geq \frac{\sum_{k=1}^{N} s_k}{2}\quad \text{for }z\in \mathbb{T}.\]
        To see this, note that, with $|z|=1$,
        \[\text{Re}\sum_{k=1}^{N}s_k\frac{z}{z-a_k}
            = \frac{1}{2}\sum_{k=1}^{N}s_k\frac{2-2\text{Re}(z\overline{a_k})}{1-2\text{Re}(z\overline{a_k})+|a_k|^2} \geq \frac{1}{2}\sum_{k=1}^{N}s_k \qedhere
\popQED \]
    \end{proof}
    
 In Theorem \ref{thm:erdos_lax_circle}, attention is restricted to the case where all $a_k$ lie on the unit circle. For this reason we introduce the notation $a_k = e^{i\alpha_k}$ for some $\alpha_k\in [0,2\pi)$. It then follows that
 \[
    f(z) = P(z)^{1/m} = c\prod_{k=1}^{N} (z-e^{i\alpha_k})^{s_{k}}
 \]
 The argument used to prove Theorem \ref{thm:erdos_lax_circle} will be an adaptation of Szeg\H{o}'s argument for polynomials as it appears in \cite{lax44}. Of course, suitable modifications are required to conform to the present situation. Specifically, we prove that any point of maximality for $f'$ on $\mathbb{T}$ is a point of maximality for $f$ with one exception.

    \begin{proof}[Proof of Theorem \ref{thm:erdos_lax_circle}]
    Recall that we are assuming $s_{k} \geq 1$ for all $k$ and that $\sum_{k=1}^{N} s_k = n/m$ where $n$ is the degree of the polynomial $P$ satisfying $f(z) = P(z)^{1/m}$. As we already remarked, these restrictions on $s_{k}$ imply that $f'$ is continuous in $\overline{\D}$. By \eqref{eq:derivative}, the second derivative of $f$ is given by
\begin{equation*}
    \begin{split}
    f''(z)& = \frac{1}{m}\left( f'(z)\frac{P'(z)}{P(z)} + f(z)\frac{P''(z)P(z)-(P'(z))^{2}}{P(z)^2}\right)\\& = \frac{1}{m}\left( f'(z)\frac{P'(z)}{P(z)} + mf'(z)\frac{P(z)}{P'(z)}\frac{P''(z)P(z)-(P'(z))^{2}}{P(z)^2}\right) \\ &= \frac{f'(z)}{m}\left( (1-m)\frac{P'(z)}{P(z)} + m\frac{P''(z)}{P'(z)}\right).
    \end{split}
\end{equation*}
With the intent of proving that $\|f'\|_\T\leq \frac{n/m}{2}\|f\|_\T$, we let $z_{0} \in \mathbb{T}$ be a point where $|f'|$ assumes its maximal value on $\mathbb{T}$. Consider first the case where $P(z_{0}) \neq 0$ and hence also $P'(z_{0}) \neq 0$ (otherwise $f'(z_0) =0$). The derivative of the map $\theta \mapsto \lvert f'(z_{0}e^{i\theta}) \rvert^{2}$ attains the value $0$ at $\theta = 0$. Thus
\begin{equation*}
    z_{0}f''(z_{0})\overline{f'(z_{0})} = \overline{z_{0}}f'(z_{0})\overline{f''(z_{0})},
\end{equation*}
meaning that $z_{0}f''(z_{0})/f'(z_{0}) \in \mathbb{R}$. Combining this with the relation for $f''$, we obtain
\begin{equation}\label{eq:max}
    \mathbb{R} \ni z_{0}\cdot \frac{f''(z_{0})}{f'(z_{0})} = \frac{1}{m}\left( (1-m)\text{Re}\left(z_{0}\frac{P'(z_{0})}{P(z_{0})}\right) + m\text{Re}\left(z_{0}\frac{P''(z_{0})}{P'(z_{0})}\right)\right).
\end{equation}
For $z \in \mathbb{T}$, recalling the representation of $P$ from \eqref{eq:polynomial}, we see that
\begin{equation}
    \label{eq:prime_over_poly}
    z\cdot \frac{P'(z)}{P(z)} = \sum_{k=1}^{n} \frac{z}{z-c_{k}} = \frac{n}{2} + i \sum_{k=1}^{n} t_{k}(z),
\end{equation}
where $t_{k}(z) = \text{Im}(z(z-c_{k})^{-1})$. If $\sum_{k=1}^{n} t_{k}(z_0) = 0$, then we obtain from equation \eqref{eq:derivative} the inequality
\[\|f'\|_\T = |f'(z_0)| = \frac{n/m}{2}|f(z_0)|\leq \frac{n/m}{2}\|f\|_\T.\]
We want to show that this is the only possibility with one exception. In this exceptional case, the inequality $\|f'\|\leq \frac{n/m}{2}\|f\|$ can easily be deduced as we shall see. Assume therefore that $\sum_{k=1}^{n} t_{k}(z_0) \neq 0$. We consider the quotients
\begin{equation}
    z^{2}\cdot\frac{P''(z)}{P(z)} = \left( \sum_{k=1}^{n} \frac{z}{z-c_{k}}\right)^{2} - \sum_{k=1}^{n} \left( \frac{z}{z-c_{k}}\right)^{2}
    \label{eq:biss_over_poly}
\end{equation}
and
\begin{equation}
    z\cdot\frac{P''(z)}{P'(z)} = z^{2}\frac{P''(z)}{P(z)} / z\frac{P'(z)}{P(z)} = \sum_{k=1}^{n} \frac{z}{z-c_{k}} - \frac{\sum_{k=1}^{n} \left( \frac{z}{z-c_{k}}\right)^{2}}{\sum_{k=1}^{n} \frac{z}{z-c_{k}}}.
    \label{eq:biss_over_prime}
\end{equation}
By substituting the explicit representations of \eqref{eq:prime_over_poly} and \eqref{eq:biss_over_prime} into \eqref{eq:max}, we see that
\begin{align*}
    &(1-m)\sum_{k=1}^{n} t_{k}(z_0)\\ + &m\left( \sum_{k=1}^{n} t_{k}(z_0) - \frac{n/2\sum_{k=1}^{n} t_{k}(z_0) - \sum_{k=1}^{n} t_{k}(z_0) \sum_{k=1}^{n} (1/4-t_{k}(z_0)^{2})}{n^{2}/4 + \left( \sum_{k=1}^{n} t_{k}(z_0) \right)^{2} }\right) = 0.
\end{align*}
Rewriting this expression and using \eqref{eq:prime_over_poly}, it follows that
\begin{equation}
    \left|\sum_{k=1}^{n}\frac{z}{z-c_k}\right|^2 = \frac{n^{2}}{4} + \left( \sum_{k=1}^{n} t_{k}(z_0) \right)^{2} = m\left( n/2-\sum_{k=1}^{n} (1/4-t_{k}(z_0)^{2}) \right).
    \label{eq:reciprocal_zero_sum}
\end{equation}
Inserting the expression for $\left|\sum_{k=1}^{n}\frac{z}{z-c_k}\right|$ from \eqref{eq:reciprocal_zero_sum} into \eqref{eq:biss_over_prime} results in
\begin{equation*}
    \begin{split}
        \text{Re}\left(z_{0}\frac{P''(z_0)}{P'(z_0)}\right) = \frac{n}{2} - \frac{n/2\sum_{k=1}^{n}(1/4-t_{k}(z_0)^{2}) + \left(\sum_{k=1}^{n} t_{k}(z_0) \right)^{2}}{n^{2}/4+\left( \sum_{k=1}^{n} t_{k}(z_0) \right)^{2}} \\ = \frac{n}{2} - \frac{n/2\left( n/2 - n^{2}/4m -  \left( \sum_{k=1}^{n} t_{k}(z_0) \right)^{2}/m\right) + \left( \sum_{k=1}^{n} t_{k}(z_0) \right)^{2}}{n^{2}/4+\left( \sum_{k=1}^{n} t_{k}(z_0) \right)^{2}} \\ = \frac{n}{2} - \frac{(1-n/2m)n^{2}/4 + (1-n/2m)\left( \sum_{k=1}^{n} t_{k}(z_0) \right)^{2}}{n^{2}/4+\left( \sum_{k=1}^{n} t_{k}(z_0) \right)^{2}} = \frac{n}{2}\left(1+\frac{1}{m}\right)-1.
    \end{split}
\end{equation*}
    Finally, we obtain that
    \begin{equation*}
        z_{0}\frac{f''(z_{0})}{f'(z_{0})} = \frac{1}{m}\left( (1-m)\frac{n}{2} + m\left( \frac{n}{2}\left(1+\frac{1}{m}\right)-1\right) \right) = \frac{n}{m}-1,
    \end{equation*}
    whenever $z_{0}$ is a maximum of $f'$ such that $\sum t_k(z_0)\neq 0$. From \eqref{eq:form} one realises that $(f')^{m} = Q$ is a polynomial of degree $n-m$. Thus
    \begin{equation*}
        \frac{z_0}{m}\frac{Q'(z_{0})}{Q(z_{0})} = z_{0}\frac{f''(z_{0})}{f'(z_{0})} = \frac{n}{m}-1.
    \end{equation*}
    Since $Q$ attains its maximum modulus on the unit circle at the same point, $z_{0}$, as $f'$, it follows from Bernstein's inequality that
    \[\|Q'\|_\T = (n-m)\|Q\|_\T.\]
    Since equality holds, this further implies that $Q(z) = c^mz^{n-m}$. By taking the $m$th root we find that $f' = cz^{n/m-1}$ (also, $n/m$ must be an integer). In this case, one sees that
    \begin{equation}
        f(z) = \tilde c(z^{n/m}+e^{i\alpha})
        \label{eq:bernstein_equality_case}
    \end{equation}
    for some $\alpha\in [0,2\pi)$ and it is verified by direct computation that the inequality $\|f'\|\leq \frac{n/m}{2}\|f\|$ holds in this case. 
    
    With this we have proven that $\sum_{k=1}^{n}t_k(z_0)\neq 0$ implies that $f$ has the representation \eqref{eq:bernstein_equality_case} for which the desired inequality can be shown to hold. In the remaining case we have that
    \begin{equation*}
        \sum_{k=1}^{n} t_{k}(z_{0}) = 0
    \end{equation*}
    and using \eqref{eq:prime_over_poly} we conclude that
    \begin{equation*}
        \|f'\|_\T = \lvert f'(z_{0}) \rvert = \frac{n/m}{2} \lvert f(z_{0}) \rvert \leq \frac{n/m}{2}\|f\|_\T.
    \end{equation*}
    It remains to consider the case when $P(z_{0}) = 0$. Then $z_{0} = e^{i\alpha_k}$ for some $k$ and $s_{k} = 1$, since otherwise $f'(z_{0}) = 0$. Then the right-hand side of
    \begin{equation*}
        \frac{f''(z)}{f'(z)} = \frac{1}{m}\left( (1-m)\frac{P'(z)}{P(z)} + m\frac{P''(z)}{P'(z)}\right)
    \end{equation*}
    has a singularity at $e^{i\alpha_k}$ (unless $m = 1$, in which case it is nothing but the classical version of the theorem), but the left-hand side does not since $f''$ is continuous at $e^{i\alpha_k}$ and $f'$ has a maximum. Hence this cannot happen (unless $f' \equiv 0$).

    As the lower bound $\|f'\|_\T\geq \frac{n/m}{2}\|f\|_\T$ follows from Lemma \ref{lem:turan} the proof is complete.
    \end{proof}

    From the proof of Theorem \ref{thm:erdos_lax_circle} we gather the following corollary.

    \begin{corollary}
        Let $$f(z) = c\prod_{k=1}^{N}(z-a_k)^{s_k}$$ with $|a_k| = 1$ and $s_k\geq 1$. Then $|f'|$
		attains its maximum on the unit circle at any point $z$ for which $|f(z)| = \|f\|_\T$, and in that case 
		\[
			\left|\sum_{k=1}^{N}\frac{s_k}{z-a_k}\right| = \mathrm{Re}\sum_{k=1}^{N}s_k\frac{z}{z-a_k}= \frac{\sum_{k=1}^{N}s_k}{2}.
		\]
    \end{corollary}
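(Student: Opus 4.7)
The plan is to sandwich $|f'(z)|$ between the sharp upper bound supplied by Theorem~\ref{thm:erdos_lax_circle} and a pointwise lower bound that becomes an identity at a maximum of $|f|$. Fix a point $z \in \T$ with $|f(z)| = \|f\|_{\T}$; there is nothing to prove if $\|f\|_{\T} = 0$, and otherwise $z$ cannot be any of the $a_k$, so the derivative formula $f'(z) = f(z)\sum_{k=1}^{N} s_k/(z-a_k)$ is valid at $z$.

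The key observation is an identity already present in the proof of Lemma~\ref{lem:turan}: when every $|a_k| = 1$, the denominator $1 - 2\mathrm{Re}(z\bar a_k) + |a_k|^2$ collapses to the numerator $2 - 2\mathrm{Re}(z\bar a_k)$, so each term of that sum equals $s_k$ and hence
\[
    \mathrm{Re}\sum_{k=1}^{N} s_k \frac{z}{z-a_k} = \frac{\sum_{k=1}^{N} s_k}{2}
\]
holds as an exact equality everywhere on $\T \setminus \{a_1,\dots,a_N\}$. Since $|z| = 1$ and $|w| \geq \mathrm{Re}(w)$ for any complex $w$, this yields
\[
    |f'(z)| = \|f\|_{\T}\, \biggl|\sum_{k=1}^{N}\frac{s_k z}{z-a_k}\biggr| \geq \frac{\sum_{k=1}^{N} s_k}{2}\|f\|_{\T} = \|f'\|_{\T},
\]
where the last equality is Theorem~\ref{thm:erdos_lax_circle}.

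The reverse inequality $|f'(z)| \leq \|f'\|_{\T}$ is automatic, so every inequality above is an equality. The first tells us that $|f'|$ attains its maximum on $\T$ at $z$, while equality in $|w| \geq \mathrm{Re}(w)$ forces $\sum_k s_k z/(z-a_k)$ to coincide with its real part $\sum_k s_k/2$; dividing by $z$ (of modulus one) gives $|\sum_k s_k/(z-a_k)| = \sum_k s_k/2$, as claimed. There is no real obstacle to overcome here — all of the difficulty has already been shouldered by Theorem~\ref{thm:erdos_lax_circle}, and the specialisation of the Lemma~\ref{lem:turan} calculation from an inequality to an identity when $|a_k|=1$ is precisely what makes both conclusions of the corollary fall out simultaneously.
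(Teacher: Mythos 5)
Your proof is correct and follows essentially the route the paper intends when it says the corollary is gathered from Theorem~\ref{thm:erdos_lax_circle}: the two ingredients are the exact identity $\mathrm{Re}\sum_k s_k z/(z-a_k)=\tfrac{1}{2}\sum_k s_k$ on $\T$ (the Lemma~\ref{lem:turan} computation, which becomes an equality when all $|a_k|=1$) and the norm equality $\|f'\|_\T=\tfrac{\sum_k s_k}{2}\|f\|_\T$. The only, cosmetic, difference is that you invoke the statement of Theorem~\ref{thm:erdos_lax_circle} rather than its proof internals, which is in fact the natural way to obtain the implication in the direction the corollary asserts (a maximum point of $|f|$ is a maximum point of $|f'|$), with equality in $|w|\geq\mathrm{Re}\,w$ then forcing the displayed identities; your handling of the degenerate cases ($\|f\|_\T=0$, and $z\neq a_k$ at a maximum) is fine.
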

    Even though it is not difficult to conclude this result for polynomials using Lax's argument, we have not been able to find it in the literature, even for the case where $f$ is a polynomial.
    
    Using the equality of Theorem \ref{thm:erdos_lax_circle}, we are able to extend the classical Erd\H{o}s--Lax inequality to the setting of powers of polynomials. Certain restrictions are needed for the zeros of $f$ in $\left\{ z : \lvert z \rvert > 1\right\}$ compared with Theorem \ref{thm:erdos_lax_circle}.
\begin{theorem}\label{thm:generalised_erdos_lax}
    Let $|a_k|\geq 1$ and let $s_{k} \geq 1$. If
    \begin{equation*}
        f(z) = c \prod_{k=1}^{N} (z-a_{k})^{s_{k}},
    \end{equation*}
    and $s_k\in \mathbb{N}$ whenever $|a_k|>1$, then
    \begin{equation*}
        \|f'\|_\T \leq \frac{\sum_{k=1}^{N} s_{k}}{2} \|f\|_\T
    \end{equation*}
    with equality if and only if all $a_k$'s are unimodular.
\end{theorem}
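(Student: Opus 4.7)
The plan is to adapt Lax's classical reciprocal-polynomial trick to the present fractional-power setting, with Theorem \ref{thm:erdos_lax_circle} (through the machinery of its proof) playing the role that Szeg\H{o}'s equality for polynomials with all zeros on $\T$ plays in the classical argument. The first step is a decomposition: write $f = f_1 f_2$ where
\[
f_1(z) = \prod_{|a_k|=1}(z-a_k)^{s_k}, \qquad f_2(z) = c\prod_{|a_k|>1}(z-a_k)^{s_k}.
\]
By hypothesis $s_k\in\N$ whenever $|a_k|>1$, so $f_2$ is a genuine polynomial, of some degree $n_2$. Form its reciprocal polynomial $f_2^*(z) = z^{n_2}\overline{f_2(1/\bar z)}$, whose zeros $1/\overline{a_k}$ lie strictly inside $\D$ and which satisfies $|f_2^*(z)| = |f_2(z)|$ on $\T$. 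Define $g := f_1 f_2^*$. Then $|g(z)| = |f(z)|$ on $\T$, so $\|g\|_\T = \|f\|_\T$, and every zero of $g$ lies in $\overline{\D}$ with multiplicity $\geq 1$.

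The next step is a key pointwise identity on $\T$. A direct computation using the elementary relations
\[
\mathrm{Re}\frac{z}{z-a} + \mathrm{Re}\frac{z}{z-1/\bar a} = 1, \qquad \mathrm{Im}\frac{z}{z-a} = \mathrm{Im}\frac{z}{z-1/\bar a}, \qquad |z|=1,\ |a|>1,
\]
together with $\mathrm{Re}(z/(z-a)) = 1/2$ for $|a|=1$, summed over all $a_k$ with weight $s_k$, shows that $zf'(z)/f(z) + \overline{zg'(z)/g(z)} = S$ on $\T$, where $S = \sum s_k$. This is equivalent to $f'(z) = g(z)\overline{zf'(z)/f(z)}$ and, after taking moduli, to
\[
|f'(z)| = |Sg(z) - zg'(z)|, \qquad z \in \T.
\]

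Since every zero of $g$ satisfies $|\tilde a_k|\leq 1$ and $s_k\geq 1$, the argument of Lemma \ref{lem:turan} yields $\mathrm{Re}(zg'(z)/g(z))\geq S/2$ on $\T$. Expanding
\[
|f'(z)|^2 = S^2|g(z)|^2 + |g'(z)|^2 - 2S|g(z)|^2\,\mathrm{Re}\bigl(zg'(z)/g(z)\bigr)
\]
and substituting the lower bound produces $|f'(z)|\leq |g'(z)|$ on $\T$. This is the Blaschke-type comparison that replaces Lax's inequality $|P'|\leq|P^{*\prime}|$ in the present setting.

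The main obstacle is to promote these pointwise identities to the sharp global estimate $\|f'\|_\T \leq (S/2)\|f\|_\T$. The plan is to pair $|f'|\leq|g'|$ with the classical Lax companion inequality $|P'(z)| + |P^{*\prime}(z)|\leq n\|P\|_\T$ (valid for polynomials $P$ of degree $n$ with no zeros in $\D$). Applied to $P = f^m$ in the rational case $s_k\in\mathbb Q$, with $P^* = c_1 g^m$ and $|c_1|=1$, this translates to $|f(z)|^{m-1}(|f'(z)|+|g'(z)|)\leq S\|f\|_\T^m$ on $\T$; evaluating at a point where $|f(z_0)|=\|f\|_\T$ and combining with $|f'|\leq|g'|$ gives $|f'(z_0)|\leq(S/2)\|f\|_\T$. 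The remaining technical point, which is the hardest step, is to show that such a $z_0$ can be taken as a maximum of $|f'|$; this is handled by an extension of the corollary to Theorem \ref{thm:erdos_lax_circle}, relying on the same second-derivative test at the maximum that powers the proof of Theorem \ref{thm:erdos_lax_circle}. A density argument, approximating irrational $s_k$ (necessarily attached to unimodular $a_k$) by rationals while keeping all data continuous, then extends the inequality to arbitrary $s_k\geq 1$. Equality is furnished by Theorem \ref{thm:erdos_lax_circle} in the all-unimodular case; conversely, if some $|a_k|>1$, then $\mathrm{Re}(zg'/g) - S/2 > 0$ on a set of positive measure, making $|f'| \leq |g'|$ strict and forcing strict inequality.
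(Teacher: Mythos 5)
Your preparatory steps are sound: the function $g=f_1f_2^\ast$ is (up to a unimodular constant) exactly the $f^\ast$ used in the paper, your pointwise identity on $\T$ is correct (modulo the misstatement ``$f'(z)=g(z)\overline{zf'(z)/f(z)}$''; the correct identity is $Sg(z)-zg'(z)=g(z)\overline{zf'(z)/f(z)}$, which does give $|f'(z)|=|Sg(z)-zg'(z)|$), and combining it with Lemma \ref{lem:turan} yields $|f'|\leq|g'|$ on $\T$ — a self-contained rederivation of the paper's inequality \eqref{eq:reciprocal_derivative}. The genuine gap is in the promotion step. Applying the classical companion inequality to $P=f^m$ gives $|f(z)|^{m-1}\bigl(|f'(z)|+|g'(z)|\bigr)\leq S\|f\|_\T^m$ on $\T$, and to extract $\|f'\|_\T\leq\frac{S}{2}\|f\|_\T$ you must evaluate at a point $z_0$ that is \emph{simultaneously} a maximum of $|f'|$ and a maximum of $|f|$; otherwise you only get $\|f'\|_\T\leq\frac{S}{2}\|f\|_\T\,(\|f\|_\T/|f(z_0)|)^{m-1}$, which is weaker and degenerates as the common denominator $m$ grows (which also poisons the density step). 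You propose to supply such a $z_0$ by ``an extension of the corollary to Theorem \ref{thm:erdos_lax_circle}'', but that corollary is genuinely tied to the hypothesis that \emph{all} zeros lie on $\T$: the second-derivative computation there rests on $\mathrm{Re}\frac{z}{z-c_k}=\frac12$ for every zero $c_k$, which fails for $|c_k|>1$. Indeed the claimed coincidence of maximizers is false once zeros off the circle are allowed — already for polynomials such as $(z-1)(z-3i)$ or $(z-2)(z-3i)$ the maximum of $|P'|$ on $\T$ and the maximum of $|P|$ occur at different points — and nothing about fractional exponents at the unimodular zeros restores it. So the hardest step of your plan is not merely unproven; the statement it relies on is wrong in the generality needed.

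The paper sidesteps this entirely by applying Theorem \ref{thm:erdos_lax_circle} not to $f$ but to the auxiliary function $g=f+\zeta f^\ast$. Since the unimodular factors of $f$ and $f^\ast$ coincide, $f+\zeta f^\ast=\prod_j(z-a_{k_j})^{s_{k_j}}\tilde Q(z)$, and Lemma \ref{lem:polya_szego_zeros} forces the polynomial $\tilde Q$ to have all its zeros on $\T$; hence $g$ is again an admissible power of a polynomial with only unimodular zeros and $\|g'\|_\T=\frac{S}{2}\|g\|_\T\leq S\|f\|_\T$. Choosing $\zeta$ so that $\arg f'(z_0)=\arg\zeta(f^\ast)'(z_0)$ at a maximum $z_0$ of $|f'|$ and using $|(f^\ast)'|\geq|f'|$ on $\T$ gives $\|g'\|_\T\geq|f'(z_0)|+|(f^\ast)'(z_0)|\geq2\|f'\|_\T$, which closes the proof without ever requiring the maxima of $|f|$ and $|f'|$ to coincide. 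To repair your argument you would need either this summation trick or a direct proof of the fractional companion inequality $|f'(z)|+|g'(z)|\leq S\|f\|_\T$ on $\T$ (without the $|f(z)|^{m-1}$ factor), which the classical $P-\lambda\|P\|_\T$ device does not give since $f-\lambda\|f\|_\T$ is no longer a power of a polynomial.
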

This result allows us to have zeros of $f$ away from the unit circle. However, the restriction is that the zeros which are located away from the circle must have integer exponents.

Before turning to the proof, we consider reciprocals of $m$th roots of polynomials. With $f$ defined via \eqref{eq:form}, we let $f^\ast(z)$ denote the function
\[f^\ast(z) = \overline{c}\prod_k(1-\overline{a}_kz)^{s_k}.\]
This definition coincides with the $m$th root of the reciprocal from \eqref{eq:reciprocal} corresponding to
\[P(z) = f(z)^m = c^m\prod_{k=1}^N(z-a_k)^{ms_k}.
\]
Indeed, since
\[P^\ast(z) = \overline{c}^m\prod_{k=1}^N(1-\overline{a}_kz)^{ms_k},\]
we see that $f^\ast(z) = P^\ast(z)^{1/m}$. Consequently, $|f(z)| = |f^\ast(z)|$ for $z\in \T$. If all zeros of $P$ are exterior to $\D$, then \cite[Lemma 3.3]{qz19} implies that $|P'(z)|\leq |(P^\ast)'(z)|$ for $z\in \T$. If we therefore assume that $|a_k|\geq 1$, then
\begin{equation}
    |(f^\ast)'(z)| = \left|\frac{1}{m}P^\ast(z)^{1/m-1}(P^\ast)'(z)\right|\geq \left|\frac{1}{m}P^{1/m-1}(z)P'(z)\right| = |f'(z)|.
    \label{eq:reciprocal_derivative}
\end{equation}

We isolate the following lemma which is a generalisation of a result of P\'{o}lya and Szeg\H{o} \cite[v.1, p. 108]{polya-szego76} to the present setting of rational powers of polynomials.

\begin{lemma}
    Let $f(z) = c\prod_{k=1}^{N}(z-a_k)^{s_k}$ with $|a_k|\geq 1$. If $|\zeta|=1$ and $l\in \N$ then for any choice of branch of $f$ and $f^\ast$, the function
    \[f(z)+\zeta z^lf^\ast(z)\]
    is zero free when $z\notin \T$.
    \label{lem:polya_szego_zeros}
\end{lemma}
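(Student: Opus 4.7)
The plan is to reduce the zero-freeness claim to a strict modulus comparison. If $f(z) + \zeta z^l f^*(z) = 0$ then $|f(z)| = |z|^l |f^*(z)|$, so it suffices to show the strict inequality $|f(z)| \neq |z|^l |f^*(z)|$ whenever $|z|\neq 1$. The moduli of the branches of $f$ and $f^*$ are single-valued on $\C$, so this reformulation sidesteps any branch ambiguity from the outset.

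The key ingredient is the elementary identity $|1-\bar a z|^2 - |z-a|^2 = (1-|z|^2)(1-|a|^2)$. For $|a|\geq 1$ this yields $|z-a|\geq |1-\bar a z|$ when $|z|<1$ and the reverse when $|z|>1$. Raising each factor to the power $s_k$ and multiplying over $k$ produces the single-valued statement
\[|f(z)|\geq |f^*(z)| \ \text{on } \D, \qquad |f(z)|\leq |f^*(z)| \ \text{on } \C\setminus \overline{\D}.\]

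It remains to incorporate the $|z|^l$ factor with $l\geq 1$. For $|z|<1$ one has $|z|^l<1$ and hence $|z|^l|f^*(z)| < |f^*(z)| \leq |f(z)|$ whenever $|f^*(z)|>0$. In the exceptional case $|f^*(z)|=0$ (which forces $z=1/\bar a_k$ for some $|a_k|>1$), the function $f$ itself is nonzero at $z$, since all zeros of $f$ lie in $\{|w|\geq 1\}$. The argument on $\C\setminus\overline{\D}$ is symmetric: $|z|^l|f^*(z)|>|f^*(z)|\geq |f(z)|$ when $|f(z)|>0$, and if $f(z)=0$ at some $|z|>1$ then $z=a_k$ for some $k$, while $f^*(z)\neq 0$, since otherwise one would also have $z=1/\bar a_j$, giving $|a_k||a_j|=1$ and contradicting $|a_k|,|a_j|\geq 1$ together with $|z|>1$. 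I do not foresee any serious obstacle beyond this bookkeeping; the argument is essentially the classical P\'{o}lya--Szeg\H{o} modulus comparison extended from polynomials to rational powers, the only real issue being the verification that $f$ and $f^*$ cannot vanish simultaneously off $\T$.
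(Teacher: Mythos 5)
Your proof is correct, and it takes a genuinely different route from the paper. You reduce zero-freeness to the strict modulus comparison $|f(z)|\neq |z|^l|f^\ast(z)|$ off $\T$ and derive it from the pointwise identity $|1-\overline{a}z|^2-|z-a|^2=(1-|z|^2)(1-|a|^2)$, raised to the powers $s_k$ and multiplied, which gives $|f^\ast|\leq|f|$ on $\D$ and $|f|\leq|f^\ast|$ off $\overline{\D}$; the factor $|z|^l$ with $l\geq 1$ then makes these strict, and your bookkeeping that $f$ and $f^\ast$ cannot vanish simultaneously off $\T$ (zeros of $f$ have modulus at least $1$, zeros of $f^\ast$ modulus at most $1$) is exactly the point that needs checking — in fact for $|z|>1$ the function $f^\ast$ never vanishes at all, since its zeros $1/\overline{a}_k$ lie in $\overline{\D}$, so your side condition $|f(z)|>0$ in that region is not even needed for the strict first inequality. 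The paper instead notes that $\log\left|z^lf^\ast(z)/f(z)\right|$ is subharmonic in $\D$, superharmonic in $\C\setminus\overline{\D}$, and vanishes on $\T$, and invokes the maximum principle, the role of the $z^l$ factor being to exclude the constant case (the expression tends to $-\infty$ at the origin and $+\infty$ at infinity). Your approach buys complete elementarity — no potential theory, explicit strict inequalities valid for all $s_k>0$ — while the paper's buys a short uniform treatment of both regions; both sidestep the branch ambiguity in the same way, by arguing only with the single-valued modulus.
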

\begin{proof}
In order to derive a contradiction assume that $f(z) + \zeta z^l f^\ast(z) = 0$ for some $z\notin \T$. In particular, it must be so that $|f(z)| = |z|^l|f^\ast(z)|$. Note that the absolute value is single-valued. The function
\[\log \left|\frac{z^lf^\ast(z)}{f(z)}\right|\]
is subharmonic on $\mathbb{D}$ and superharmonic on $\mathbb{C}\setminus\mathbb{D}$ while constantly equal to zero on the unit circle. Hence the maximum principle \cite[Theorem 2.3.1]{ransford95} implies that the logarithmic expression must be zero free away from $\T$ unless it is in fact a constant function. 
\end{proof}

With this result at hand, we can mimic Lax's technique of proving the Erd\H{o}s--Lax inequality from the corresponding version of Theorem \ref{thm:erdos_lax_circle} for polynomials.

\begin{proof}[Proof of Theorem \ref{thm:generalised_erdos_lax}]
By assumption, $|a_k|\geq 1$ and $s_{k} \geq 1$ are rational numbers. The function 
\[f(z) = c \prod_{k} (z-a_{k})^{s_{k}}\]
is defined and analytic on $\D$. Let $k_j$ denote those indices that correspond to the $a_k$'s which lie on the unit circle. For any of the remaining indices, it therefore holds that $s_k\in \N$. This implies that
\[f^\ast(z) =  \prod_{j}(z-a_{k_j})^{s_{k_j}}Q(z)\]
where $Q$ is a polynomial whose zeros are situated in $\mathbb{D}$. For any $|\zeta|=1$, it holds true that
\[f(z)+\zeta f^\ast(z) = \prod_{j}(z-a_{k_j})^{s_{k_j}}\tilde{Q}(z),\]
where Lemma \ref{lem:polya_szego_zeros} implies that $\tilde{Q}$ is a polynomial which has all its zeros on the unit circle. Therefore we can apply Theorem \ref{thm:erdos_lax_circle} to the function $g = f+\zeta f^\ast$ where $\zeta$ is chosen such that $\arg f'(z_0) = \arg \zeta (f^\ast)'(z_0)$ for a point $z_0$ satisfying $|f'(z_0)| = \|f'\|_\T$. But then
\[\|g'\|_\T\geq |g'(z_0)| = |f'(z_0)|+|(f^\ast)'(z_0)|\geq 2 |f'(z_0)| = 2\|f'\|_\T.\]
Since we further have that \[\|g'\|_\T= \frac{\sum_{k=1}^{N} s_k}{2}\|g\|_\T\leq \left(\sum_{k=1}^{N} s_k \right)\|f\|_\T,\] the result follows.
\end{proof}
\end{section}

\begin{section}{Weighted Chebyshev polynomials on the unit circle with a vanishing weight}
\label{section:lsv}
In this section we prove Theorem \ref{thm:generalised_LSV} and consider some of its consequences. Our method is inspired by the one presented in \cite{lsv79} for the case of $s\in \N$. The proof differs, since it relies on Theorem \ref{thm:erdos_lax_circle} rather than the classical Erd\H{o}s--Lax inequality. 

\begin{proof}[Proof of Theorem \ref{thm:generalised_LSV}]
    Given $s\in (0,\infty)$ and $n\in \N$, let $T_n^{w_s}$ be the minimiser of \eqref{eq:arbitrary_zeros}. Due to the convexity of $\overline{\D}$, it follows that all zeros of $T_n^{w_s}$ lie in $\overline{\D}$. This can be deduced from \cite[Theorem III.3.4]{st97}. Consequently, Lemma \ref{lem:polya_szego_zeros} implies that
    \[zT_n^{w_s}(z)+\zeta (T_n^{w_s})^\ast(z)\]
    has all its zeros on the unit circle for any $|\zeta|=1$. Let $\zeta = -1$ so that we can write
    \[zT_n^{w_s}(z)-(T_n^{w_s})^\ast(z) = (z-1)\prod_{k=1}^{n}(z-e^{i\alpha_k^\ast}).\]
    The fact that there is a zero at $z=1$ is a consequence of $T_n^{w_s}$ having real coefficients. We introduce the polynomial
    \[P(z) = \prod_{k=1}^{n}(z-e^{i\alpha_k^\ast})\]
    whose zeros are all situated on the unit circle. Furthermore,
    \[\|w_{s+1}P\|_{\T} = \max_{z\in \T}\Big|(z-1)^s\left(zT_n^{w_s}(z)-(T_n^{w_s})^\ast(z)\right)\Big|\leq 2\|w_sT_n^{w_s}\|_\T.\]
     Note that $P$ is a candidate for a solution to \eqref{eq:fixed_zeros} for $w_{s+1}$. Let $\mathring{T}_n^{w_{s+1}}$ denote any such minimiser. Then, by definition,
    \[\|w_{s+1}\mathring{T}_n^{w_{s+1}}\|_\T\leq \|w_{s+1}P\|_\T.\]
    On the other hand, by Theorem \ref{thm:erdos_lax_circle},
    \begin{align}
        \label{eq:inequalities}
        \begin{split}
            \|w_sT_n^{w_s}\|_\T&\leq \frac{1}{s+1+n}\left\|\frac{d}{dz}\left(w_{s+1}\mathring{T}_n^{w_{s+1}}\right)\right\|_\T = \frac{1}{2}\|w_{s+1}\mathring{T}_n^{w_{s+1}}\|_\T\\&\leq \frac{1}{2}\|w_{s+1}P\|_\T\leq \|w_sT_n^{w_s}\|_\T.
        \end{split}
    \end{align}
    By uniqueness of the minimiser $w_sT_n^{w_s}$, equality must hold throughout. This implies that
    \begin{align}
        \begin{split}
            w_s(z)T_n^{w_s}(z) & = \frac{1}{s+n+1}\frac{d}{dz}\left(w_{s+1}(z)\mathring{T}_n^{w_{s+1}}(z)\right),
        \end{split}
    \end{align}
    or equivalently that 
    \begin{equation}
        w_{s+1}(z)\mathring{T}_n^{w_{s+1}}(z) = (s+n+1)\int_{1}^{z}w_s(\zeta)T_n^{w_s}(\zeta)d\zeta.
    \end{equation}
    Finally, \eqref{eq:inequalities} immediately implies 
    \[\|w_sT_n^{w_s}\|_{\T} = \frac{1}{2}\|w_{s+1}\mathring{T}_n^{w_{s+1}}\|_\T\]
    which completes the proof.
\end{proof}

In order to prove Theorem \ref{thm:norm_limits}, we first show that there is a way of relating $\mathring{T}_{n}^{w_{s+1}}$ to weighted Chebyshev polynomials on the interval $[-1,1]$ with Jacobi weights. This matter is of independent interest and is something that was also considered in \cite{lsv79} for integer values of $s$. To that end, we introduce for $\alpha,\beta\geq 0$ the weights
\[w^{(\alpha,\beta)}(x) = (1-x)^\alpha(1+x)^{\beta},\quad x\in [-1,1]\]
and the notation $T_{n}^{(\alpha,\beta)}:= T_n^{w^{(\alpha,\beta)}}$ which is reminiscent of the notation typically used for Jacobi polynomials. 

\begin{theorem}
    For $s\geq 1$ and $n\in \N$, we have
    \begin{equation}
        w_s(z)\mathring{T}_{n}^{w_{s}}(z) = \begin{cases}
            (-1)^{\frac{s}{2}}(2z)^{\frac{s+n}{2}}w^{(\frac{s}{2},0)}(x)T_{\frac{n}{2}}^{\left(\frac{s}{2},0\right)}(x),& n\text{ even,} \\
            (-1)^{\frac{s}{2}}(2z)^{\frac{s+n}{2}}w^{(\frac{s}{2},\frac{1}{2})}(x)T_{\frac{n-1}{2}}^{\left(\frac{s}{2},\frac{1}{2}\right)}(x) & n\text{ odd,}
        \end{cases}
        \label{eq:jacobi}
    \end{equation}
    where $x = (z+z^{-1})/2$ and $|z|\leq 1$. As a consequence,
    \begin{equation}
        \|w_s\mathring{T}_n^{w_s}\|_{\T} = \begin{cases}
            2^{\frac{s+n}{2}}\|w^{(\frac{s}{2},0)}T_{\frac{n}{2}}^{(\frac{s}{2},0)}\|_{[-1,1]}& n \text{ even,}\\
            2^{\frac{s+n}{2}}\|w^{(\frac{s}{2},\frac{1}{2})}T_{\frac{n-1}{2}}^{(\frac{s}{2},\frac{1}{2})}\|_{[-1,1]}& n \text{ odd.}
        \end{cases}
        \label{eq:jacobi_norms}
    \end{equation}
    \label{thm:jacobi}
\end{theorem}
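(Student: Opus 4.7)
The plan is to push the minimization through the Joukowski-type map $x=(z+z^{-1})/2$, which sends $\T$ two-to-one onto $[-1,1]$, and match the result with a Jacobi-weighted Chebyshev problem on $[-1,1]$. The algebraic backbone is a pair of branch identities, valid on $\overline{\D}$ for the branches of $(z-1)^s$ and $(z+1)^{1/2}$ that make both functions analytic in $\D$:
\begin{equation*}
(z-1)^{s} = (-1)^{s/2}(2z)^{s/2}(1-x)^{s/2}, \qquad (z+1) = (2z)^{1/2}(1+x)^{1/2},
\end{equation*}
together with the identity $(z-e^{i\alpha})(z-e^{-i\alpha})=2z(x-\cos\alpha)$ for conjugate pairs of unit roots.

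By Theorem~\ref{thm:generalised_LSV} the minimizer $\mathring T_n^{w_s}$ is unique for $s\geq 1$, and since $|w_s(\bar z)|=|w_s(z)|$ on $\T$ the minimization problem~\eqref{eq:fixed_zeros} is invariant under complex conjugation; this forces $\mathring T_n^{w_s}$ to have real coefficients. Its zeros on $\T$ therefore pair off into conjugate pairs, with a single leftover real zero at $\pm 1$ when $n$ is odd. For $n$ even, the pair identity yields $\mathring T_n^{w_s}(z)=(2z)^{n/2}Q(x)$ with $Q$ monic of degree $n/2$ and real zeros in $[-1,1]$; for $n$ odd, assuming the real zero is at $-1$, the $(z+1)$ identity gives $\mathring T_n^{w_s}(z)=(2z)^{n/2}(1+x)^{1/2}R(x)$ with $R$ monic of degree $(n-1)/2$ and real zeros in $[-1,1]$. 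Multiplying by $w_s$ via the first branch identity reproduces exactly the right-hand side of~\eqref{eq:jacobi} but with $Q$ (respectively $R$) in place of the claimed Jacobi-Chebyshev polynomial. Since $Q\mapsto(2z)^{n/2}Q(x)$ (respectively $R\mapsto(2z)^{n/2}(1+x)^{1/2}R(x)$) is a bijection from monic polynomials of the stated degree with zeros in $[-1,1]$ onto the subfamily of $\T$-candidates over which $\mathring T_n^{w_s}$ is extremal, and since the Jacobi-Chebyshev polynomials $T_{n/2}^{(s/2,0)}$ and $T_{(n-1)/2}^{(s/2,1/2)}$ each have all of their zeros in $[-1,1]$ by standard alternation theory for Jacobi weights, they correspond via the bijection to admissible $\T$-candidates; minimality therefore forces $Q=T_{n/2}^{(s/2,0)}$ and $R=T_{(n-1)/2}^{(s/2,1/2)}$, proving~\eqref{eq:jacobi}. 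The norm identity~\eqref{eq:jacobi_norms} follows by taking suprema and using $|2z|^{(s+n)/2}=2^{(s+n)/2}$ on $\T$.

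The main obstacle is justifying, for odd $n$, that the lone real zero of $\mathring T_n^{w_s}$ lies at $-1$ rather than at $+1$. If it were at $+1$, the factor $(z-1)$ would combine with $w_s$ to produce $w_{s+1}$, and uniqueness in Theorem~\ref{thm:generalised_LSV} would force $\mathring T_n^{w_s}(z)=(z-1)\mathring T_{n-1}^{w_{s+1}}(z)$, so the even case applied to $(n-1,s+1)$ would give $\|w_s\mathring T_n^{w_s}\|_\T=2^{(s+n)/2}\|w^{((s+1)/2,0)}T_{(n-1)/2}^{((s+1)/2,0)}\|_{[-1,1]}$. I would rule this out by producing the explicit competitor $(z+1)\widetilde{S}(z)$, with $\widetilde{S}$ corresponding via the bijection above to $T_{(n-1)/2}^{(s/2,1/2)}$, and comparing the two Jacobi-Chebyshev norms; the required strict inequality $\|w^{(s/2,1/2)}T_{(n-1)/2}^{(s/2,1/2)}\|_{[-1,1]}<\|w^{((s+1)/2,0)}T_{(n-1)/2}^{((s+1)/2,0)}\|_{[-1,1]}$ can be obtained either from the Bernstein-type asymptotics of~\cite{bernstein30-31} already invoked for Theorem~\ref{thm:norm_limits}, or by checking the base case $s=1$ where Theorem~\ref{thm:generalised_LSV} with $s=0$ yields explicitly $\mathring T_n^{w_1}(z)=1+z+\cdots+z^n$, which vanishes at $-1$ for odd $n$, and then propagating upward in $s$ via the derivative identity~\eqref{eq:lsv_derivative}.
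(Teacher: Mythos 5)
Your reduction via $x=(z+z^{-1})/2$, the conjugate-pair factorisation $(z-e^{i\alpha})(z-e^{-i\alpha})=2z(x-\cos\alpha)$, and the identification of the transformed minimiser with $T_{n/2}^{(s/2,0)}$ resp.\ $T_{(n-1)/2}^{(s/2,1/2)}$ is exactly the paper's argument, and that part is fine. The genuine gap is where you yourself locate the main obstacle: showing that for odd $n$ the lone real zero of $\mathring{T}_n^{w_s}$ sits at $-1$. Your plan reduces this to the strict inequality $\|w^{(s/2,1/2)}T_{(n-1)/2}^{(s/2,1/2)}\|_{[-1,1]}<\|w^{((s+1)/2,0)}T_{(n-1)/2}^{((s+1)/2,0)}\|_{[-1,1]}$, but neither of your two suggested justifications delivers it. The Bernstein asymptotics cannot: the two weights $(1-x)^{s/2}(1+x)^{1/2}$ and $(1-x)^{(s+1)/2}$ have the same exponent sum $(s+1)/2$, so by \eqref{B asymp} and \eqref{eq:potential} both norms have the identical leading behaviour $2^{1-m}2^{-(s+1)/2}(1+o(1))$; the asymptotics are blind to the difference, and in any case you would need the inequality for every fixed $n$, not just for large $n$. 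The second route (explicit base case $s=1$, then ``propagate upward in $s$'' via \eqref{eq:lsv_derivative}) also does not work as stated: the derivative identity relates $\mathring{T}_n^{w_{s+1}}$ to the \emph{unrestricted} minimiser $T_n^{w_s}$, not to $\mathring{T}_n^{w_s}$, so knowledge of where $\mathring{T}_n^{w_s}$ vanishes does not transfer directly; and even if it did, stepping $s\mapsto s+1$ from the explicit case $s=1$ only reaches integer $s$, whereas the whole point of the theorem (and of the paper) is arbitrary real $s\ge 1$.

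For comparison, the paper closes this point with a short contradiction that works for all real $s\ge 1$ at once: if the real zero were at $+1$, uniqueness of the minimiser of \eqref{eq:fixed_zeros} (Theorem \ref{thm:generalised_LSV}) forces $\mathring{T}_{2m+1}^{w_{s}}(z)=(z-1)\mathring{T}_{2m}^{w_{s+1}}(z)$, hence $w_s\mathring{T}_{2m+1}^{w_s}=w_{s+1}\mathring{T}_{2m}^{w_{s+1}}$; differentiating and applying \eqref{eq:lsv_derivative} to both sides (the normalising constants $s+2m+2$ coincide... in fact both equal $s+2m+1+1$) yields $w_{s-1}T_{2m+1}^{w_{s-1}}=w_{s}T_{2m}^{w_{s}}$, which is impossible because the unrestricted minimisers have all zeros strictly inside $\D$, so the left-hand side vanishes at $z=1$ to order exactly $s-1$ while the right-hand side vanishes there to order exactly $s$. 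If you want to keep your norm-comparison route instead, you must actually prove the strict inequality between the two Jacobi-weighted Chebyshev norms for every $m$ and every real $s\ge1$, which is not obvious (the two weights are not pointwise comparable on $[-1,1]$); as it stands, that step is an unproven assertion and the proof is incomplete.
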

\begin{proof}
    By definition, $w_s\mathring{T}_n^{w_s}$ minimises the expression
    \[\max_{|z| = 1}\left|(z-1)^s\prod_{k=1}^{n}(z-e^{i\alpha_k})\right|\]
    over $\alpha_k\in [0,2\pi)$. We choose $\alpha_k^\ast$ such that
    \[\mathring{T}_n^{w_s}(z) = \prod_{k=1}^{n}(z-e^{i\alpha_k^\ast})\]
    and consider the two cases separately. 

    $\mathbf{n}$ \textbf{even:} Write $n = 2m$ where $m$ is a positive integer. Since $\mathring{T}_n^{w_s}$ is the unique minimiser, it follows by symmetry of $w_s$ and $\T$ that all zeros of $\mathring{T}_n^{w_s}$ come in conjugate pairs. By reordering, we may assume that $\alpha_k^\ast = 2\pi-\alpha_{k+m}^\ast$. The map $J(z) = (z+z^{-1})/2$ is a conformal map from $\C\setminus \overline{\D}$ to $\C\setminus [-1,1]$ and furthermore satisfies $J(z) = \mathrm{Re}(z)$ for $z\in \T$. Also, by writing $x = \mathrm{Re}(z) = J(z)$ for $z\in \T$, it is clear that $z^2+1 = 2zx$ and therefore
    \[(z-e^{i\alpha})(z-e^{-i\alpha}) = z^2-2z\cos(\alpha)+1 = 2z(x-\cos \alpha).\]
    In particular, by letting $\alpha = 0$ we see that
    \[(z-1)^2 = 2z(x-1).\]
    We conclude that for $|z| = 1$ the $\alpha_k^\ast$ minimise
    \begin{align*}
        \max_{z\in \T}\left|(z-1)^{s}\prod_{k=1}^{2m}(z-e^{i\alpha_k^\ast})\right| & = \max_{z\in \T}\left|(z-1)^s\prod_{k=1}^{m}2z(x-\cos\alpha_k^\ast)\right| \\
        & = \max_{z\in \T}\left|[(2z)(x-1)]^{\frac{s}{2}}(2z)^m\prod_{k=1}^{m}(x-\cos \alpha_k^\ast)\right|\\
        & = 2^{\frac{s}{2}+m}\max_{x\in [-1,1]}\left|(x-1)^{\frac{s}{2}}\prod_{k=1}^{m}(x-\cos \alpha_k^\ast)\right|.
    \end{align*}
    However, the minimiser of the last expression is precisely $T_{m}^{(\frac{s}{2},0)}$ so we conclude that 
    \[
    T_{m}^{(\frac{s}{2},0)}(x) = \prod_{k=1}^{m}(x-\cos\alpha_k^\ast).
    \]
    This proves the even cases of \eqref{eq:jacobi} and \eqref{eq:jacobi_norms}.

    $\mathbf{n}$ \textbf{odd:} Write $n = 2m+1$. Due to symmetry of both $\T$ and the weight function $w_s$, we have that 
    \[
    \mathring{T}_n^{w_s}(z) = (z+\sigma)\prod_{k=1}^{2m}(z-e^{i\alpha_k^\ast})
    \]
    where $\alpha_k^\ast = 2\pi - \alpha_{k+m}^\ast$ and $\sigma \in \{-1,1\}$. We proceed with showing that $\sigma = 1$. Indeed, in order to arrive at a contradiction, assume to the contrary that $\sigma = -1$. Theorem \ref{thm:generalised_LSV} implies that the minimiser corresponding to \eqref{eq:fixed_zeros} is unique and therefore since we are currently assuming that $\sigma = -1$, we find that
    \[
    \mathring{T}_{2m+1}^{w_{s}}(z) = (z-1)\mathring{T}_{2m}^{w_{s+1}}(z).
    \]
    Upon multiplying both sides with $(z-1)^s$ and then differentiating, Theorem \ref{thm:generalised_LSV} implies that
    \[w_{s-1}(z)T_{2m+1}^{w_{s-1}}(z) = w_{s}(z)T_{2m}^{w_s}(z).\]
    However, it can easily be seen that $T_n^{w_s}$ has all its zeros in $\D$ for any combination of $s$ and $n$, see the beginning of the proof of \cite[Theorem 2.3]{lsv79}. As a consequence, $w_{s-1}T_{2m+1}^{w_{s-1}}$ has a zero at $z=-1$ of multiplicity $s-1$ while the zero of $w_{s}T_{2m}^{w_s}$ at $z =-1$ has multiplicity $s$. Since this is contradictory, we conclude that $\sigma = 1$ and gather that
\[
    \mathring{T}_n^{w_s}(z) = (z+1)\prod_{k=1}^{2m}(z-e^{i\alpha_k^\ast}).
    \]

Repeating the exact same procedure as in the even case gives us that 
    % \begin{align*}
    %     \max_{z\in \T}\Bigg|(z-1)^{s}(z+1)&\prod_{k=1}^{2m}(z-e^{i\alpha_k^\ast})\Bigg| = \max_{z\in \T}\left|(z-1)^s(z+1)\prod_{k=1}^{m}2z(x-\cos\alpha_k^\ast)\right| \\
    %     & = \max_{z\in \T}\left|[(2z)(x-1)]^{\frac{s}{2}}[2z(x+1)]^{\frac{1}{2}}(2z)^m\prod_{k=1}^{m}(x-\cos \alpha_k^\ast)\right|\\
    %     & = 2^{\frac{s+1}{2}+m}\max_{x\in [-1,1]}\left|(x-1)^{\frac{s}{2}}(x+1)^{\frac{1}{2}}\prod_{k=1}^{m}(x-\cos \alpha_k^\ast)\right|
    % \end{align*}
    \begin{equation*}
       \|w_{s}\mathring{T}_{n}^{w_s}\|_\T = 2^{\frac{s+1}{2}+m}\max_{x\in [-1,1]}\left|(x-1)^{\frac{s}{2}}(x+1)^{\frac{1}{2}}\prod_{k=1}^{m}(x-\cos \alpha_k^\ast)\right|
    \end{equation*}
    and we conclude that 
    \[
    T_{m}^{(\frac{s}{2},\frac{1}{2})}(x) = \prod_{k=1}^{m}(x-\cos\alpha_k^\ast).
    \]
    This completes the proof.
\end{proof}

The combination of Theorems \ref{thm:generalised_LSV} and \ref{thm:jacobi} is remarkable since it allows seemingly different classes of Chebyshev polynomials to be related with each other. The class of Chebyshev polynomials corresponding to Jacobi weights on the interval $[-1,1]$ is somewhat more well-studied than weighted Chebyshev polynomials on the unit circle. As an implication of this, we show that this enables the computation of the norm asymptotics of $w_sT_n^{w_s}$ as described in Theorem \ref{thm:norm_limits}. First we recall the following result of Bernstein \cite{bernstein30-31} as it is stated in \cite{cer23}. It is a consequence of the alternation theorem.

 \begin{theorem*}[Bernstein \cite{bernstein30-31}]
			Suppose $\alpha_k\in \R$ and $b_k\in [-1,1]$ for $k=0, 1, \ldots, m$.  Consider a weight function $w:[-1,1]\rightarrow [0,\infty)$ of the form
			\begin{equation}  \label{weight}
				w(x) = w_0(x)\prod_{k=0}^{m}|x-b_k|^{\alpha_k},
			\end{equation}
			where $w_0$ is Riemann integrable and satisfies $1/M \leq w_0(x) \leq M$ for some constant $M\geq 1$. Then %the weighted Chebyshev polynomial
%			\[
%				T_n^{w}(x) = x^{n}+\sum_{k=0}^{n-1}a_kx^k
%			\]
%			which minimises
%			\[
%				\sup_{x\in [-1,1]}w(x)|T_n^w(x)|
%			\]
%			among all monic polynomials of degree $n$ satsifies
			\begin{equation}  \label{B asymp}
				%\sup_{x\in [-1,1]}w(x)|T_n^w(x)|= 
				\Vert w T_n^w \Vert_{[-1, 1]}=
				2^{1-n}\exp\left\{\frac{1}{\pi}\int_{-1}^{1}
				\frac{\log w(x)}{\sqrt{1-x^2}}dx\right\}
				\bigl(1+o(1)\bigr)
			\end{equation}
			as $n\rightarrow \infty$.
			\label{thm:Bernstein}
		\end{theorem*}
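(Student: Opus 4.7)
The plan is to transform the interval problem to a problem on the unit circle via the Joukowski substitution $x = \cos\theta$, and then apply Szeg\H{o}'s theory of outer functions to isolate the contribution of the weight from the polynomial factor.

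First, under the substitution $x = \cos\theta$, any monic polynomial $p_n(x)$ of degree $n$ becomes
\[
p_n(\cos\theta) = 2^{1-n}\cos(n\theta) + q_{n-1}(\theta),
\]
where $q_{n-1}$ is a cosine trigonometric polynomial of degree less than $n$; this already exposes the $2^{1-n}$ prefactor in the target asymptotic. The supremum of $|wp_n|$ over $[-1,1]$ equals the supremum of $|W(\theta)(2^{1-n}\cos(n\theta) + q_{n-1}(\theta))|$ over $\theta\in [0,\pi]$, where $W(\theta) = w(\cos\theta)$. The hypotheses on $w$ guarantee that $\log W \in L^1(\T)$: the factor $w_0$ is bounded between positive constants, and each $|\cos\theta - b_k|^{\alpha_k}$ has only a logarithmic singularity. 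I can therefore define the Szeg\H{o} outer function $D$ on $\D$ with $|D(e^{i\theta})|^2 = W(\theta)$ for almost every $\theta$, and a direct computation gives
\[
|D(0)|^2 = \exp\left(\frac{1}{2\pi}\int_0^{2\pi}\log W(\theta)\,d\theta\right) = \exp\left(\frac{1}{\pi}\int_{-1}^1\frac{\log w(x)}{\sqrt{1-x^2}}\,dx\right),
\]
which is exactly the exponential factor appearing in the statement.

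The next step is to establish matching bounds $\|wT_n^w\|_{[-1,1]} \leq 2^{1-n}|D(0)|^2(1 + o(1))$ and $\|wT_n^w\|_{[-1,1]} \geq 2^{1-n}|D(0)|^2(1 - o(1))$. For the upper bound I would construct an explicit monic Ansatz by truncating the power series of $D(z)^{-2}$, multiplying by $z^n$, symmetrising under $z \mapsto z^{-1}$, and reading off the resulting polynomial in $x = (z+z^{-1})/2$; the boundary identity $|D|^2 = W$ together with the $H^2$-decay of the tail of $D^{-2}$ controls the weighted sup-norm of this Ansatz. For the matching lower bound I would invoke the alternation characterisation \eqref{eq:alternation_theorem}: $T_n^w$ has an alternating set of $n+1$ points in $[-1,1]$, and testing $wT_n^w$ against a suitable dual functional (a signed measure supported on the alternation set, with Christoffel-type weights built from $D$) recovers the leading coefficient of $T_n^w$, which is $1$, and thereby forces the desired lower bound.

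The main obstacle is controlling $D$ near the singular points $b_k$ of the weight, particularly when $b_k$ lies in the interior of $[-1,1]$ or when $\alpha_k$ is negative. Near each $b_k = \cos\theta_k$, the weight $W$ behaves like $|\theta - \theta_k|^{\alpha_k}$, so $D$ itself develops mild endpoint-type singularities that must not spoil the truncation-tail estimate in the upper bound argument. Bernstein's resolution is to split $w = w_0 \cdot \prod_k |x - b_k|^{\alpha_k}$, handle the smooth bounded factor $w_0$ by uniform polynomial approximation, and treat the finitely many power singularities by reducing them to explicit Jacobi-weight Chebyshev polynomials (whose norms admit classical closed-form asymptotics) or by computing $D$ directly as a product of Blaschke-type factors. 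Stitching these pieces together with a standard compactness argument to propagate the $(1+o(1))$ control then yields the stated asymptotic.
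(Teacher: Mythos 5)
The paper does not actually prove this statement: it is imported verbatim from Bernstein via \cite{cer23}, with the one-line remark that it ``is a consequence of the alternation theorem.'' The classical route behind that remark is quite different from yours: one uses monotonicity of the weighted minimal norm in the weight to sandwich the Riemann-integrable factor $w_0$ between weights for which the weighted Chebyshev problem is solvable essentially explicitly (Bernstein--Szeg\H{o}/Jacobi-type weights, identified through the alternation/equioscillation characterisation \eqref{eq:alternation_theorem}), and then lets the multiplicativity of the geometric-mean factor produce the $(1+o(1))$. Your plan via the Szeg\H{o} outer function $D$ is a genuinely different route, and the reduction $x=\cos\theta$, the identification of the constant with $|D(0)|^2$, and the appearance of $2^{1-n}$ are all correct; but as written the proposal has two real gaps.

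First, the upper bound. You assert that the boundary identity $|D|^2=W$ together with the $H^2$-decay of the tail of $D^{-2}$ controls the \emph{weighted sup-norm} of the truncated Ansatz. $L^2$ control of a tail on $\T$ does not give $L^\infty$ control, and for $w_0$ merely Riemann integrable (it may be discontinuous on a dense set) the outer function $D$ need not extend continuously to $\T$, so no truncation of its Taylor series converges uniformly; this is exactly the point where sup-norm (strong) asymptotics are harder than the $L^2$ Szeg\H{o} asymptotics and where either extra regularity of $\log W$ or an approximation-by-explicit-weights argument --- i.e.\ Bernstein's original scheme --- has to enter. Your proposed fix (Jacobi weights, Blaschke-type factors) addresses only the finitely many power singularities, not the rough factor $w_0$, which is the actual obstruction. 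Second, the lower bound. Averaging $\log\bigl(w|T_n^w|\bigr)$ against the arcsine measure gives only $2^{-n}\exp\{\cdots\}$, a factor $2$ short of the stated $2^{1-n}$; recovering that factor is precisely the nontrivial content, and ``testing against a signed measure on the alternation set with Christoffel-type weights built from $D$'' is not specified enough to deliver it --- this is where the alternation theorem (or an explicit duality argument) must genuinely be used, not merely invoked. Until these two steps are made precise, the proposal does not establish the theorem.
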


Using \cite[Lemma 2]{cer23}, it follows that
\[\frac{1}{\pi}\int_{-1}^{1}\frac{\log|x-b|}{\sqrt{1-x^2}}dx = -\log 2\]
for any $b\in [-1,1]$. We therefore find that
\begin{equation}
    \frac{1}{\pi}\int_{-1}^{1}\frac{\log |x-1|^\alpha|x+1|^\beta}{\sqrt{1-x^2}}dx = -(\alpha+\beta)\log 2.
    \label{eq:potential}
\end{equation}

\begin{proof}[Proof of Theorem \ref{thm:norm_limits}.]
We know from the combination of Theorems \ref{thm:generalised_LSV} and \ref{thm:jacobi} that
\[2\|w_sT_n^{w_s}\|_{\T} = \|w_{s+1}T_{n}^{w_{s+1}}\|_{\T} = \begin{cases}
        2^{\frac{s+1+n}{2}}\|w^{(\frac{s+1}{2},0)}T_{\frac{n}{2}}^{(\frac{s+1}{2},0)}\|_{[-1,1]}& n \text{ even,}\\
        2^{\frac{s+1+n}{2}}\|w^{(\frac{s+1}{2},\frac{1}{2})}T_{\frac{n-1}{2}}^{(\frac{s+1}{2},\frac{1}{2})}\|_{[-1,1]}& n \text{ odd.}
    \end{cases}\]
We study the even and odd case separately. Using Theorem \ref{thm:Bernstein} together with \eqref{eq:potential}, we conclude that
\begin{align*}
    2^{\frac{s+1+n}{2}}\|w^{(\frac{s+1}{2},0)}T_{\frac{n}{2}}^{(\frac{s+1}{2},0)}\|_{[-1,1]}&\sim 2^{\frac{s+1+n}{2}}2^{1-\frac{n}{2}}2^{-\frac{s+1}{2}} = 2, \\
    2^{\frac{s+1+n}{2}}\|w^{(\frac{s+1}{2},\frac{1}{2})}T_{\frac{n-1}{2}}^{(\frac{s+1}{2},\frac{1}{2})}\|_{[-1,1]}&\sim 2^{\frac{s+1+n}{2}}2^{1-\frac{n-1}{2}}2^{-\frac{s+1}{2}-\frac{1}{2}} = 2,
\end{align*}
where the limits as $n\rightarrow \infty$ are taken along even and odd integers separately. In either case we see that 
\[\|w_sT_n^{w_s}\|_{\T}\sim 1\] as $n\rightarrow \infty$.

To see that the sequence $\|w_sT_n^{w_s}\|_\T$ is monotonically decreasing we simply observe that
\[\|w_sT_n^{w_s}\|_\T = \|zw_sT_n^{w_s}\|_\T\geq \|w_sT_{n+1}^{w_s}\|_\T.\qedhere
\popQED\]\end{proof}
One immediate consequence of Theorem \ref{thm:norm_limits} is the fact that $\|w_sT_n^{w_s}\|_\T\geq 1$ for any $n$. As we remarked before, this can also be deduced by other means. For instance, it is a consequence of \cite[Theorem 12]{nsz21} but it can also easily be proven using the maximum principle. Alternatively, it is a consequence of Szeg\H{o}'s inequality \cite{szego24} if we take an appropriate power of the expression, see \cite[Theorem 5.5.4]{ransford95}. 

We can use the solution of Hal\'{a}sz's problem to get lower bounds for the convergence. Indeed, by \eqref{eq:halasz_sol} it follows that
\[\|w_1T_n^{w_1}\|_\T = \left[\cos\left(\frac{\pi}{2(n+2)}\right)\right]^{-(n+2)} = 1+\frac{\pi^2}{8n}+O(1/n^{2}).\]
One can show, for any choice of positive integers $p$ and $q$, that
\begin{equation}
    \|w_{p/q}T_n^{w_{p/q}}\|_{\T} \geq 1+\frac{\pi^2}{8nq^2}+O(1/n^2)
    \label{eq:convergence_speed}
\end{equation}
as $n\rightarrow \infty$.

It is illustrative to see how this can be deduced and so we take $w_{1/2}$ as an example. Since $(w_{1/2}T_n^{w_{1/2}})^{2}$ is a monic polynomial of degree $2n+1$ with a zero at $z = 1$, we find by comparison that
\[\|w_{1/2}T_n^{w_{1/2}}\|_{\T}^2\geq \|w_1T_{2n}^{w_1}\|_{\T} = \left[\cos\left(\frac{\pi}{2(2n+2)}\right)\right]^{-(2n+2)}.\]
By taking the square root of both sides, we conclude that
\[\|w_{1/2}T_n^{w_{1/2}}\|_{\T}\geq \left[\cos\left(\frac{\pi}{2(2n+2)}\right)\right]^{-(n+1)}= 1+\frac{\pi^2}{32 n}+O(1/n^2).\]

%We know that $s\mapsto \|w_sT_n^{w_s}\|_\T$ attains its global minimum if and only if $s = 0$ and numerical simulations indicate that, for a fixed degree $n$, the norms increase with $s$.
The final result that we wish to settle is Theorem \ref{thm:zero_limits}. This entails proving that the normalised zero counting measure of $T_n^{w_1}$ converges in the weak-star sense to the normalised Lebesgue measure on $\T$. This result can be interpreted using potential theory, since normalised Lebesgue measure on the unit circle coincides with equilibrium measure in that case, see \cite[Corollary 3.7.7]{ransford95}. For an arbitrary compact set $\sfE\subset \C$ there are quite general results which relate weak-star limits of zero counting measures corresponding to $T_n^{\sfE}$ with equilibrium measure on $\sfE$. Put somewhat vaguely, \cite[Theorem 2.3]{ms91} implies that any weak--star limit of the normalised zero counting measure corresponding to $T_n^{\sfE}$ will have a so-called ``balayage'' which coincides with equilibrium measure. To conclude that the weak-star limit is actually equal to the equilibrium measure, it is enough to conclude that there are at most $o(n)$ zeros of $T_n^{\sfE}$ in any compact set contained in the interior $\sfE$ as $n\rightarrow \infty$, see \cite[Theorem 2.1]{bss88}. Another approach to proving such a result, which is the key in our case to proving Theorem \ref{thm:zero_limits}, is to consider the limits of $|T_n^{\sfE}(z)|^{1/n}$ at points $z$ in the interior of $\sfE$. This technique has previously been used for instance in \cite[Corollary III.3.3]{minadiaz06} to determine weak-star limits of normalised zero counting measures corresponding to Bergman-Carleman polynomials and similarly in \cite[Theorem 2.5]{mdss05} in relation to weighted Bergman-Carleman polynomials. In the present situation where we are dealing with weighted Chebyshev polynomials on the unit circle corresponding to $w_1$, \cite[Theorem III.4.1]{st97} says that if $|w_1(z)T_n^{w_1}(z)|^{1/n}\rightarrow 1$ for some point $|z|<1$ as $n\rightarrow \infty$ then the normalised zero counting measure corresponding to $T_n^{w_1}$ will converge weak-star to the equilibrium measure on $\T$. This result is of fundamental importance to obtaining the result in our case since we can compute $T_n^{w_1}$ explicitly. This further highlights why the case $s=1$ is special. For other values of $s$ the polynomials appearing in Theorem \ref{thm:jacobi} do not possess known representations. 

\begin{proof}[Proof of Theorem \ref{thm:zero_limits}]
    Our strategy of proof is to show that $|w_1(z)T_n^{w_1}(z)|^{1/n}\rightarrow 1$ for some point $|z|<1$. For simplicity we choose $z = 0$ as our reference point and compute $T_n^{w_1}(0)$ explicitly. Hence if we can show that $|T_n^{w_1}(0)|^{1/n}\rightarrow 1$, then the result will follow. Theorems \ref{thm:generalised_LSV} and \ref{thm:jacobi} give us the equations
    \begin{align*}
        w_1(z)T_n^{w_1}(z) &= \frac{1}{2+n}\frac{d}{dz}\left(w_{2}(z)\mathring{T}_n^{w_2}(z)\right),
    \end{align*}
    and
     \begin{equation*}
        w_2(z)\mathring{T}_{n}^{w_{2}}(z) = \begin{cases}
            -(2z)^{\frac{2+n}{2}}w^{(1,0)}(x)T_{\frac{n}{2}}^{\left(1,0\right)}(x),& n\text{ even,} \\
            -(2z)^{\frac{2+n}{2}}w^{(1,\frac{1}{2})}(x)T_{\frac{n-1}{2}}^{\left(1,\frac{1}{2}\right)}(x) & n\text{ odd,}
        \end{cases}
    \end{equation*}
    where $x = (z+z^{-1})/2$. As before, we split the argument into the two separate cases and carry out all details since there are some subtle differences between the two cases.
    
    $\mathbf{n}$ \textbf{even:} Assume that $n = 2m$. Then we gather that
    \begin{equation}
        (z-1)^2\mathring{T}_{n}^{w_2}(z) = (2z)^{m+1}(x-1)T_m^{(1,0)}(x).
        \label{eq:s1case_even}
    \end{equation}
    We claim that $(x-1)T_m^{(1,0)}(x)$ coincides with a suitable scaling and translation of the classical Chebyshev polynomial of the first kind of degree $m+1$ which we denote by $T_{m+1}$. Recall that
    \begin{equation}
        T_k(x) = 2^{1-k}\cos(k\theta),\quad x = \cos \theta
        \label{eq:classical_cheb_first_kind}
    \end{equation}
    when $x\in [-1,1]$. The zeros of $\cos(k\theta)$ for $\theta\in [0,\pi]$ are precisely the values $\theta_j^{(k)} = \frac{(2j+1)\pi}{2k}$ for $j = 0,\dotsc,k-1$. Notice that the corresponding points on $[-1,1]$ are given by $\xi_j^{(k)} = \cos(\theta_j^{(k)})$, ordered from right to left. We introduce the monic polynomial $Q_{m+1}$ of degree $m+1$ defined by
    \[Q_{m+1}(x):=\left(\frac{2}{\xi_0^{(m+1)}+1}\right)^{m+1}T_{m+1}\left(\frac{x(\xi_0^{(m+1)}+1)+(\xi_0^{(m+1)}-1)}{2}\right).\]
    For any value of $m\geq 0$ this polynomial has a zero at $x = 1$. There are precisely $m+1$ points in $[-1,1]$ where $Q_{m+1}$ attains its maximal modulus. Furthermore, adjacent extremal points have alternating signs. Consequently, the alternation property in $\eqref{eq:alternation_theorem}$ is satisfied and so it follows from \cite[Theorem 7]{nsz21} that
    \begin{equation}
        (x-1)T_m^{(1,0)}(x) =Q_{m+1}(x).
        \label{eq:classical_representation}
    \end{equation}
    Writing $(x-1)T_m^{(1,0)}(x) = \sum_{k=0}^{m+1}a_kx^k$ with $a_{m+1} = 1$, we find from \eqref{eq:s1case_even} that
    \begin{align*}
        \frac{d}{dz}&\left((z-1)^2\mathring{T}_n^{w_2}(z)\right)\\& = 2^{m+1}\left\{(m+1)\left(\frac{z^{-1}}{2^{m+1}}+\frac{a_m}{2^{m}}\right)-\frac{1}{2}\left((m+1)\frac{z^{-1}}{2^m}+\frac{ma_m}{2^{m-1}}\right)\right\}+o(1)
    \end{align*}
    as $z\rightarrow 0$. Therefore,
    \[-T_{2m}^{w_1}(0) = w_1(0)T_{2m}^{w_1}(0)= \frac{a_m}{m+1}.\]
    We are left with determining $a_m$ which is the coefficient in front of $x^{m}$ of $Q_{m+1}$. Since $T_{m+1}$ is an even/odd polynomial when $m+1$ is an even/odd number, we find that
    \[a_m = (m+1)\cdot\frac{\xi_0^{(m+1)}-1}{\xi_0^{(m+1)}+1}.\]
    Hence
    \[-T_{2m}^{w_1}(0) = \frac{\xi_0^{(m+1)}-1}{\xi_0^{(m+1)}+1} = \frac{\pi^2}{4(m+1)^2}+O(m^{-3})\]
    from which we see that
    \[|T_{2m}^{w_1}(0)|^{1/2m}\rightarrow 1.\]
    This completes the study of the even case.

    $\mathbf{n}$ \textbf{odd:} Assume that $n = 2m+1$. Then
    \begin{align}
        \begin{split}
            (z-1)^2\mathring{T}_{2m+1}^{w_2}(z) & = (2z)^{m+1+\frac{1}{2}}(1+x)^{1/2}(x-1)T_{m}^{(1,\frac{1}{2})}(x) \\
            & = 2^{m+1}z^{m+1}(z+1)(x-1)T_{m}^{(1,\frac{1}{2})}(x).
        \end{split}
        \label{eq:cheb_jacobi_odd}
    \end{align}
    We write $(x-1)T_m^{(1,\frac{1}{2})}(x) = \sum_{k=0}^{m+1}b_kx^k$ with $b_{m+1} = 1$. Differentiating \eqref{eq:cheb_jacobi_odd}, we find that
    \begin{align*}
        \frac{d}{dz}\left\{(z-1)^2\mathring{T}_{2m+1}^{w_2}(z)\right\}  = 1+2b_m+O(z)
    \end{align*}
    as $z\rightarrow 0$. Therefore,
    \[-T_{2m+1}^{w_1}(0) = w_1(0)T_{2m+1}^{w_1}(0) = \frac{1+2b_m}{2m+3}.\]
    The final ingredient to analysing the behaviour of $T_{2m+1}^{w_1}$ is to determine $b_m$.

    With the change of variables $x = \cos \theta$, together with elementary trigonometry, we see that
    \[(1+x)^{1/2} = \sqrt{2}\cos\frac{\theta}{2}.\]
    Let $V_{k}$ denote the Chebyshev polynomial of the third kind of degree $k$, normalised to have leading coefficient $1$. That is,
    \[V_{k}(x) = 2^{-k}\frac{\cos((k+\frac{1}{2})\theta)}{\cos\frac{\theta}{2}},\quad x = \cos \theta.\]
    $V_k$ is the monic polynomial of degree $k$ which deviates the least from zero on $[-1,1]$ with respect to multiplication by the weight function $\sqrt{1+x}$, see e.g. \cite[Property 1.1]{mason93}. The zeros of $\cos (k+\frac{1}{2})\theta$ are given by $\vartheta_j^{(k)} = \frac{\pi}{2k+1}$ for $j = 0,\dotsc,k-1$. The corresponding points on $[-1,1]$ are thus given by $\eta_j^{(k)} = \cos \vartheta_j^{(k)}$, again ordered from right to left. Since the monic polynomial
    \begin{align}
        \begin{split}
            R_{m+1}(x):=\left(\frac{2}{\eta_0^{(m+1)}+1}\right)^{m+1}V_{m+1}\left(\frac{x(\eta_0^{(m+1)}+1)+(\eta_0^{(m+1)}-1)}{2}\right).
        \end{split}
        \label{eq:third_kind}
    \end{align}
    has a zero at $1$ and satisfies the alternation property from \eqref{eq:alternation_theorem} for $m+1$ points on $[-1,1]$ we conclude from \cite[Theorem 7]{nsz21} that $R_{m+1}$ is actually equal to $(x-1)T_{m}^{(1,\frac{1}{2})}(x)$. The Chebyshev polynomials of the third kind can be represented using Chebyshev polynomials of the first kind together with a change of variables. Explicitly we have that 
    \[V_k(x) = 2^{k}\sqrt{\frac{2}{1+x}}T_{2k+1}\left(\sqrt{\frac{x+1}{2}}\right)\]
    where $T_{2k+1}$ is the Chebyshev polynomial of the first kind as in \eqref{eq:classical_cheb_first_kind}, see \cite{mason93}. We can therefore use explicit formulas for the coefficients of $T_{2k+1}$ to get formulas for the coefficients $V_{k}$, see e.g. \cite{rivlin74}. This gives us that
    \[b_m = (m+1)-\frac{2m+3}{\eta_0^{(m+1)}+1}.\]
    Since $\eta_0^{(m+1)} = 1-\frac{\pi^2}{2(2m+3)^2}+O(m^{-4})$, it follows that
    \begin{align*}
        %b_m & = (m+1)-\frac{2m+3}{2}\left(1+\frac{\pi^2}{4(2m+3)^2}+O(m^{-4})\right) \\ 
        %& = -\frac{1}{2}-\frac{\pi^2}{8(2m+3)}+O(m^{-2})
        b_m  = -\frac{1}{2}-\frac{\pi^2}{8(2m+3)}+O(m^{-2})
    \end{align*}
    as $m\rightarrow \infty$ and therefore
    \[T_{2m+1}^{w_1}(0) = \frac{\pi^2}{4(2m+3)^2}+O(m^{-3}).\]
    From this we finally conclude that
    \[|T_{2m+1}^{w_1}(0)|^{\frac{1}{2m+1}}\rightarrow 1\]
    as $m\rightarrow \infty$. 

    In conclusion, we see that $|T_n^{w_1}(0)|^{\frac{1}{n}}\rightarrow 1$ and therefore \cite[Theorem III.4.1]{st97} implies that $\nu_{n,1}$ converges in the weak-star sense to the equilibrium measure on $\T$.
\end{proof}
The method of proof relied on the explicit representation of the polynomials $(x-1)T_{m}^{(1,\frac{1}{2})}$ and $(x-1)T_m^{(1,0)}$ which is available to us in the form of classical Chebyshev polynomials. For general $T_n^{(\alpha,\beta)}$ an explicit representation seems to be unknown. One difficulty of obtaining asymptotic formulae in this case, lies in the fact that the weights $(1-x)^\alpha(1+x)^{\beta}$ vanish at the end points of $[-1,1]$. For non-vanishing weights, several results concerning the asymptotical behaviour of the corresponding weighted Chebyshev polynomials exist, see e.g. \cite{kp08, kroo14}.
\end{section}

\begin{section}{Concluding remarks and numerical examples}
\label{sec:conclusion}    
The contribution in this article consists of extending two previous results beyond the regime of polynomials to fractional powers of polynomials. We showed that the results in \cite{lsv79} which deals with weighted Chebyshev polynomials on the unit circle can be lifted to include weights with non-integer powers. As an intermediate step in proving this, we established an extension of the Erd\H{o}s--Lax inequality. We will discuss the outlook of these results separately. 

We begin, however, with a brief discussion concerning the Chebyshev polynomials corresponding to the sets $\sfE_m$ defined in \eqref{eq:lemniscate}. From \eqref{eq:lemniscate_cheb}, we conclude the relation
\[\|T_{nm+l}^{\sfE_m}\|_{\sfE_m} = \|w_{l/m}T_{n}^{w_{l/m}}\|_{\T}.\]
As a corollary to Theorem \ref{thm:norm_limits}, this characterisation implies the following result.
\begin{corollary}
    With $\sfE_m$ defined as in \eqref{eq:lemniscate} and $T_{n}^{\sfE_m}$ denoting the corresponding Chebyshev polynomials, it holds that
\[\lim_{n\rightarrow \infty}\|T_n^{\sfE_m}\|_{\sfE_m} = 1\]
for any $m\in \N$.
\end{corollary}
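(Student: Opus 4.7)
The plan is to reduce the corollary to Theorem \ref{thm:norm_limits} by exploiting the identity \eqref{eq:lemniscate_cheb} that the introduction established, together with the observation explicitly stated just before the corollary, namely
\[
\|T_{nm+l}^{\sfE_m}\|_{\sfE_m} = \|w_{l/m}T_n^{w_{l/m}}\|_{\T},\qquad 0\leq l<m.
\]
This identity converts the lemniscatic Chebyshev problem into a family of $m$ weighted Chebyshev problems on the unit circle, one for each residue class $l$ modulo $m$, with weight parameter $s=l/m\in [0,\infty)$.

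Once this reduction is in place, I would apply Theorem \ref{thm:norm_limits} $m$ separate times, once for each value of $s=l/m$ where $l\in\{0,1,\dotsc,m-1\}$. Each application yields
\[
\lim_{n\rightarrow \infty}\|w_{l/m}T_n^{w_{l/m}}\|_{\T}=1,
\]
so along each arithmetic progression $\{nm+l\}_{n\in \N}$ of degrees, the sequence $\|T_{N}^{\sfE_m}\|_{\sfE_m}$ converges to $1$. Since the integers split into the finite union of these $m$ progressions, a standard $\varepsilon$--argument combines the finitely many subsequential limits into a limit for the full sequence: given $\varepsilon>0$, for each $l$ pick $N_l$ so that $n\geq N_l$ forces $|\|w_{l/m}T_n^{w_{l/m}}\|_\T-1|<\varepsilon$, and then $N\geq m\cdot \max_l N_l$ guarantees the corresponding inequality for $\|T_N^{\sfE_m}\|_{\sfE_m}$ regardless of the residue of $N$ modulo $m$.

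There is essentially no obstacle: all the heavy lifting was done in proving Theorem \ref{thm:norm_limits} (which in turn depends on Theorem \ref{thm:generalised_LSV} and the Bernstein asymptotic). The only small point worth flagging is that the weight appearing naturally in \eqref{eq:lemniscate_cheb} on the $\zeta$-circle is $(\zeta+1)^{l/m}$ rather than $w_{l/m}(\zeta)=(\zeta-1)^{l/m}$; however, the rotation $\zeta\mapsto -\zeta$ preserves $\T$ and sends one weight's modulus to the other, so the two minimisation problems share the same extremal norm, which is why the identity above is legitimate. After noting this, the corollary follows directly from combining the $m$ applications of Theorem \ref{thm:norm_limits}.
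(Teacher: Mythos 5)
Your proof is correct and follows essentially the same route as the paper: the identity $\|T_{nm+l}^{\sfE_m}\|_{\sfE_m} = \|w_{l/m}T_n^{w_{l/m}}\|_{\T}$ from \eqref{eq:lemniscate_cheb} combined with Theorem \ref{thm:norm_limits} applied to each residue class $l$ modulo $m$. Your remark that the weight $(\zeta+1)^{l/m}$ is carried to $|w_{l/m}|$ by the rotation $\zeta\mapsto-\zeta$ is a fair point of care that the paper glosses over, but it does not change the argument.
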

At the same time \eqref{eq:convergence_speed} limits the decay rate to $\|T_n^{\sfE_m}\|_{\sfE_m}\geq 1+\frac{\pi^2}{8nm^2}+O(n^{-2})$ when $n$ is not divisible by $m$. This should be compared with \eqref{eq:Faber_convergence} where the norms of Chebyshev polynomials corresponding to the closure of an analytic Jordan domains are shown to decay to $1$ exponentially fast. The external conformal map corresponding to $\sfE_m$ is for $|z|>1$ given by $\Phi(z) = (z^m+1)^{1/m}$ where the branch is chosen such that $\Phi(z) = z+O(1)$ as $z\rightarrow \infty$. While $\Phi$ can be conformally extended across the boundary of $\T$ away from the $m$th roots of $-1$, the set $\sfE_m$ is not the closure of a Jordan domain. Hence, \eqref{eq:Faber_convergence} can not be expected to hold.

Theorems \ref{thm:generalised_LSV} and \ref{thm:jacobi} further imply that the classes $\{T_{n}^{\sfE_m}\}$, $\{T_{n}^{w_s}\}$ and $\{T_{n}^{(\alpha,\beta)}\}$ are all related. Determining one of these classes of polynomials would shed light on the remaining ones.

Our first suspicions of the validity of Theorem \ref{thm:generalised_LSV} came from numerical computations using a generalisation of the Remez algorithm due to Tang, see \cite{remez34-1, remez34-2, tang88}. Using this algorithm, it is possible to compute $w_sT_n^{w_s}$ for any $s>0$. By further considering the primitive of $w_sT_n^{w_s}$ conditioned to have a zero of order $s+1$ at $z=1$ one sees that all the zeros of this primitive function are situated on the unit circle. Numerical computations further hinted at a property of the zeros of $w_sT_n^{w_s}$, namely that they asymptotically approach the boundary $\T$, see Figure \ref{fig:zeros}. 

\begin{figure}[h!]
\begin{subfigure}{.33\textwidth}
  \centering
  \includegraphics[width=\linewidth]{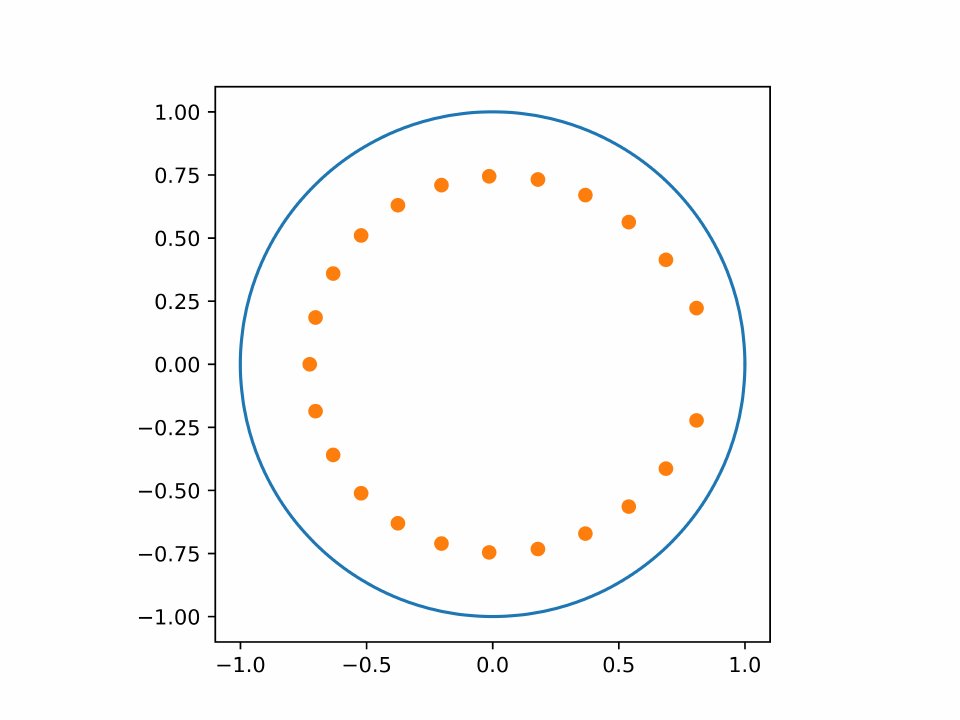}
  \caption{$T_{23}^{w_{1/2}}$}
  \label{fig:sfig1}
\end{subfigure}%
\begin{subfigure}{.33\textwidth}
  \centering
  \includegraphics[width=\linewidth]{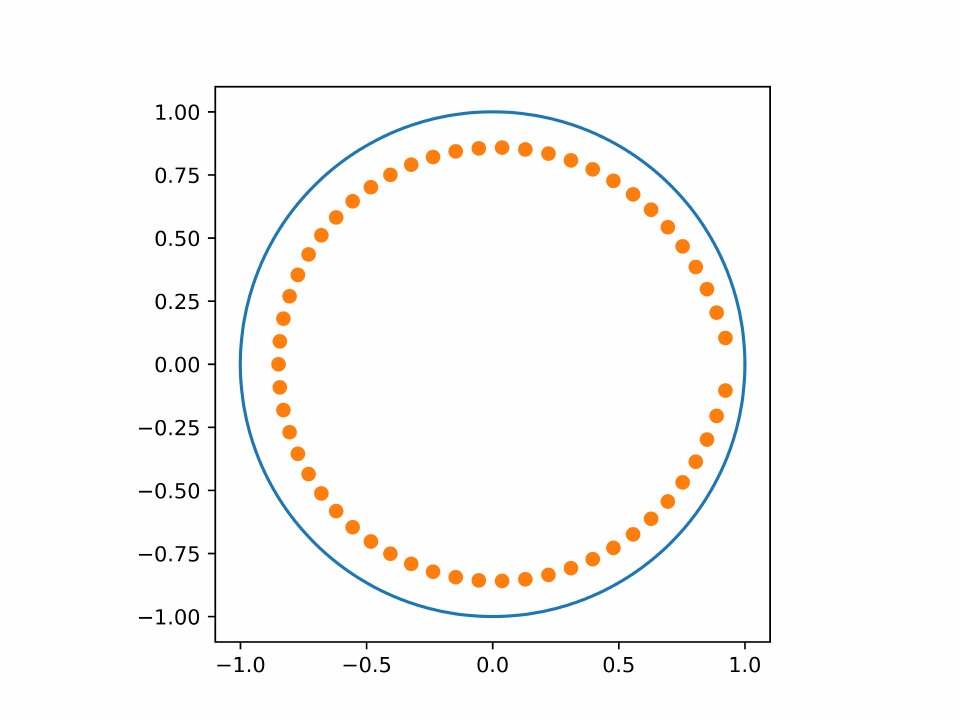}
  \caption{$T_{57}^{w_{1/2}}$}
  \label{fig:sfig2}
\end{subfigure}
\begin{subfigure}{.33\textwidth}
  \centering
   \includegraphics[width=\linewidth]{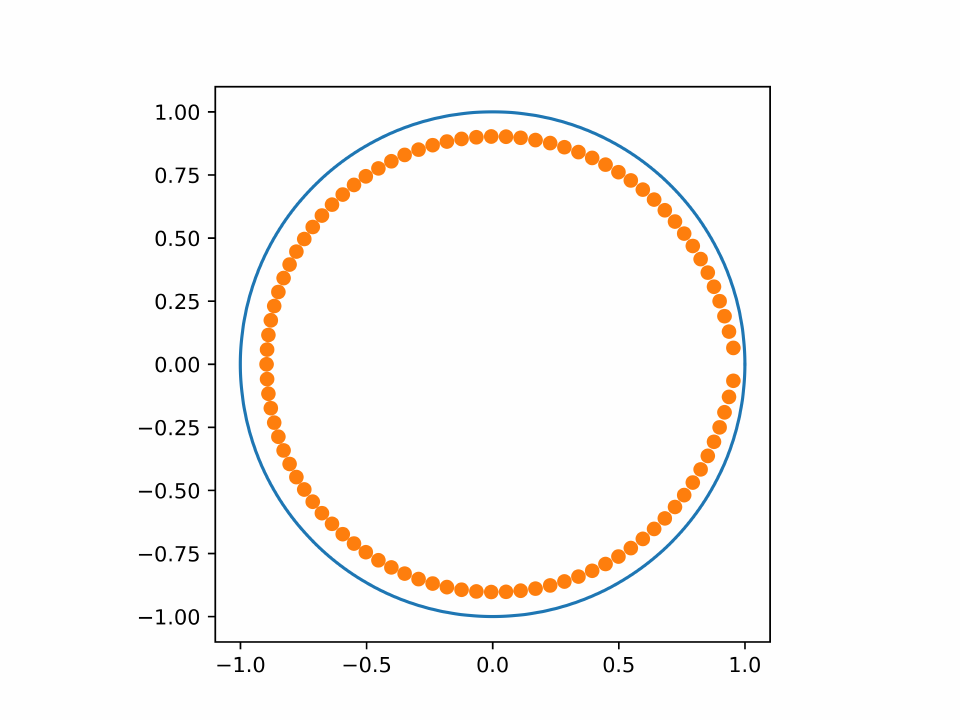}
  \caption{$T_{95}^{w_{1/2}}$}
  \label{fig:sfig3}
\end{subfigure}
    \caption{The ``dots'' represent zeros of $T_n^{w_{1/2}}$.}
    \label{fig:zeros}
\end{figure}

Using the notation $\nu_{n,s}$ to denote the normalised zero counting measure as defined in \eqref{eq:counting_measure}, Theorem \ref{thm:zero_limits} implies that $\nu_{n,1}$ in fact converges to the measure $\frac{d\theta}{2\pi}$ on $\T$. Based on the plots in Figure \ref{fig:zeros}, and similar ones for other values of $s$, we hypothesise that the result extends for all $s>0$.

\begin{conjecture}
    For any value of $s>0$, the measure $\nu_{n,s}$ converges in the weak-star sense to the measure $\frac{d\theta}{2\pi}$ on $\T$.
    \label{conjecture:zeros}
\end{conjecture}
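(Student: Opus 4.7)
The natural approach is to adapt the proof of Theorem \ref{thm:zero_limits} to arbitrary $s > 0$. By \cite[Theorem III.4.1]{st97}, the weak-star convergence of $\nu_{n,s}$ to $\frac{d\theta}{2\pi}$ on $\T$ would follow from the single pointwise statement
\begin{equation*}
    |T_n^{w_s}(z_0)|^{1/n} \longrightarrow 1, \qquad n \to \infty,
\end{equation*}
at some interior point $z_0$ of $\overline{\D}$. As in the $s = 1$ case I would take $z_0 = 0$ and aim to show $|T_n^{w_s}(0)|^{1/n} \to 1$.

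To evaluate $T_n^{w_s}(0)$, the same two-step mechanism works for every $s > 0$. Theorem \ref{thm:generalised_LSV} gives
\begin{equation*}
    -T_n^{w_s}(0) = w_s(0)T_n^{w_s}(0) = \frac{1}{s+n+1}\left.\frac{d}{dz}\bigl\{w_{s+1}(z)\mathring{T}_n^{w_{s+1}}(z)\bigr\}\right|_{z=0},
\end{equation*}
and since $s + 1 \geq 1$, Theorem \ref{thm:jacobi} expresses $w_{s+1}(z)\mathring{T}_n^{w_{s+1}}(z)$ as a scalar multiple of $(2z)^{(s+1+n)/2}$ times a Jacobi-weighted Chebyshev polynomial evaluated at $x = (z+z^{-1})/2$, with parameters $\alpha = (s+1)/2$ and $\beta \in \{0,\tfrac{1}{2}\}$ determined by the parity of $n$. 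Substituting $x^j = 2^{-j}\sum_l \binom{j}{l} z^{2l-j}$ and isolating the coefficient of $z$ in the resulting Laurent expansion, exactly as in the proof of Theorem \ref{thm:zero_limits}, reduces the right-hand side of the above display to a specific coefficient of $T_{\lfloor n/2\rfloor}^{(\alpha,\beta)}(x)$---essentially the subleading one, which equals the negative of the sum of its zeros.

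The main obstacle is controlling this coefficient for non-integer $\alpha$. In the $s = 1$ case the proof hinges on the crucial identification of $(x-1)T_m^{(1,0)}(x)$ and $(x-1)T_m^{(1,1/2)}(x)$ with affine rescalings of the classical Chebyshev polynomials of the first and third kind, yielding an exact formula. For general non-integer $\alpha$ no such closed form appears to be available. An upper bound of order $k$ is immediate from the fact that the zeros of $T_k^{(\alpha,\beta)}$ lie in $[-1,1]$, but a matching lower bound showing the coefficient is not super-polynomially small is considerably more delicate. My tentative plan is to establish pointwise asymptotics of $T_k^{(\alpha,\beta)}$ at points of $\C \setminus [-1,1]$, either via a Riemann--Hilbert analysis of the extremal problem or by comparison with Jacobi orthogonal polynomials whose normalised zero counting measures are classically known to converge to the arcsine measure on $[-1,1]$. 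As a softer alternative one could attempt to bypass the coefficient computation by invoking \cite[Theorem 2.3]{ms91}: any weak-star subsequential limit of $\nu_{n,s}$ has balayage equal to the equilibrium measure of $\T$, so it would suffice to prove directly that $\nu_{n,s}(\{|z| \leq r\}) \to 0$ for every $r < 1$, although this too reduces to a quantitative statement about how fast the zeros of $T_n^{w_s}$ approach the unit circle.
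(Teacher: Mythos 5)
Note first that the statement you are addressing is Conjecture \ref{conjecture:zeros}, which the paper does not prove; it is explicitly left open, and the authors themselves explain why the method of Theorem \ref{thm:zero_limits} does not extend: the polynomials $T_k^{(\alpha,\beta)}$ have no known explicit representation for general non-integer $\alpha$. Your proposal correctly reproduces the natural reduction (via Theorems \ref{thm:generalised_LSV} and \ref{thm:jacobi}, and \cite[Theorem III.4.1]{st97}, it suffices to show $|w_s(0)T_n^{w_s}(0)|^{1/n}\to 1$), and it correctly locates the obstruction in the relevant coefficient of the Jacobi-weighted Chebyshev polynomial. But that is exactly where the proposal stops being a proof: the lower bound on this coefficient is deferred to an unexecuted Riemann--Hilbert analysis or to a ``comparison with Jacobi orthogonal polynomials,'' and the balayage alternative via \cite{ms91} is likewise only a reduction to the unproven statement $\nu_{n,s}(\{|z|\le r\})\to 0$ --- which is precisely the reformulation the paper already records via \cite[Theorem 1.7]{ab01}. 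So there is a genuine gap, and it coincides with the gap that makes the statement a conjecture in the first place.

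Moreover, the specific soft route you suggest cannot close it as stated. The quantity you need is (up to a factor $1/(n+s+1)$ and harmless unimodular constants such as $w_s(0)=(-1)^s$) the coefficient of $x^{m}$ in the monic transplanted polynomial, i.e.\ minus the sum of its zeros. Weak-star convergence of the zero counting measures of $T_k^{(\alpha,\beta)}$ to the arcsine measure --- which does hold for any asymptotically extremal sequence on $[-1,1]$ --- only gives that this sum is $o(k)$, because the arcsine measure has mean zero; it is a cancellation-sensitive functional and weak convergence cannot rule out that it is exponentially small, which is what would destroy $|T_n^{w_s}(0)|^{1/n}\to 1$. In the case $s=1$ the paper gets the needed polynomial-order lower bound (the sum is of size $\asymp 1/m$) only from the exact identification with classical Chebyshev polynomials of the first and third kind. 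For general $s$ one would need second-order (strong, Szeg\H{o}-type) asymptotics for Chebyshev polynomials with weights vanishing at the endpoints of $[-1,1]$, and, as the paper notes after Theorem \ref{thm:zero_limits}, such results are not available; this is the concrete missing ingredient your plan would have to supply.
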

From \cite[Theorem 1.7]{ab01} we gather that it would be enough to settle that \[\nu_{n,s}\Big(\{z: |z|\leq r\}\Big)\rightarrow 0,\quad n\rightarrow \infty\] for all fixed $r\in (0,1)$. This behaviour is strongly suggested by numerical simulations. The reason that the method used in the proof of Theorem \ref{thm:zero_limits} does not seem to carry over in this case is that the polynomials $T_{n}^{(\alpha,\beta)}$ do not possess a known explicit representation for arbitrary $\alpha$ and $\beta$. Due to the relation expressed in \eqref{eq:lemniscate_cheb}, the validity of this conjecture would further imply that the normalised zero counting measure corresponding to $T_{nm+l}^{\sfE_m}$ would converge to equilibrium measure on $\sfE_m$ as $n\rightarrow \infty$ for $l\neq 0$. In \cite{st90} the authors specifically mention the zero distribution for Chebyshev polynomials of $\sfE_2$ since the odd ones always have a zero at the origin which lies on the boundary, despite the boundary being an analytic curve. However, in this case the curve has a self intersection.

It is an interesting matter to consider zero distributions arising for sequences of monic polynomials exhibiting asymptotic minimisation within the unit disk. With asymptotic minimisation we mean sequences $P_n$ of monic polynomials of degree $n$ satisfying $\|P_n\|_{\T}\rightarrow 1$. Recall that any monic polynomial $P$ satisfies $\|P\|_\T\geq 1$ with equality if and only if $P$ is a monomial.

Consider fixing a zero located at $0\leq r<1$. The polynomial $P_n(z) = z^n-r^n$ has the properties $P_n(r) = 0$ and $\|P_n\|_{\T} = 1+r^n\rightarrow 1$. Intriguingly, despite the convergence of the norm, all zeros of $P_n$ are confined to the circle $\{z: |z| = r\}$. The polynomial $P_n$ is not minimal among all monic polynomials with a prescribed zero at $z = r$. However, numerical simulations indicate that the zeros of Chebyshev polynomials corresponding to weights of the form $|z-r|$ with $0<r<1$ distribute on the circle of radius $r$ in a similar manner.

These considerations in particular illuminate possible weak-star limits of the zero distributions corresponding to asymptotically minimising sequences. For fixed $0\leq r_k<1$, $k = 1,\dotsc,m$ the polynomials
\[P(z,n_0,\dotsc,n_m) = w_1(z)T_{n_0}^{w_1}(z)\prod_{k=1}^{m}(z^{n_k}-r_k^{n_k})\]
will be asymptotically minimising in the sense that $\|P(\cdot, n_0,\dotsc,n_m)\|_\T\rightarrow 1$ if all $n_k\rightarrow \infty$. The corresponding weak-star limit of the normalised zero distribution
\[\nu_{n_0,\dotsc,n_m} = \frac{1}{\sum_{k=0}^{m} n_k}\sum_{\{ z\, :\, P(z,n_0,\dotsc,n_m) = 0\}}\delta_z\]
will be a convex combination of normalised arc-length measures on the circles $\{z: |z| = r_k\}$ and $\T$. This points out one of the main difficulties in settling Conjecture \ref{conjecture:zeros}; it is easy to produce asymptotically minimising sequences of polynomials whose normalised zero distributions have very different behaviours.

Something that seem to further complicate matters concerns the speed with which the zeros of $T_n^{w_s}$ approach the boundary. Based on numerical experiments such as the one illustrated in Figure \ref{fig:zeros} it does not seem like the zeros approach the boundary particularly fast. An intuitive approach to motivate why this the case is that if $P$ is any monic polynomial with all its zeros on $\T$ then the combination of Theorems \ref{thm:generalised_LSV} and \ref{thm:norm_limits} imply that
\[\|w_sP\|_\T\geq 2, \quad s\geq 1.\]
This shows that the zeros of an asymptotically minimising sequence, with norm converging to $1$, should not lie on the boundary. Another intuitive argument concerning the case for $s = 1$ comes from considering the trial polynomials
\[ Q_n(z) = \frac{z^n-r_n^n}{z-r_n}(z-1)\]
where $r_n\in (0,1)$ for each $n$. These polynomials are monic and have a fixed zero at $z = 1$ with all other zeros lying on the circle of radius $r_n$. We consider the case where $n$ is odd since then it is easily seen that \[\|Q_n\|_\T  = |Q_n(-1)| = 2\cdot \frac{1+r_n^n}{1+r_n}.\] If the zeros of $Q_n$ are to approach $\T$ then it is necessary that $r_n\rightarrow 1$ as $n\rightarrow\infty$. On the other hand we must have that $r_n^n\rightarrow 0$ as $n\rightarrow \infty$ if we want the norms to be asymptotically minimising, i.e. $\|Q_n\|_\T\rightarrow 1$. This shows that the sequence of trial minimisers $\{Q_n\}$ will be asymptotically minimising if and only if $r_n\rightarrow 1$ and $r_n^{n}\rightarrow 0$. For instance we can put $r_n = 1-n^{-\alpha}$ for any $\alpha\in (0,1)$. The fact that $r_n^n\rightarrow 0$ shows that the speed with which the zeros approach the boundary is upper bounded if we are to have that this class of trial polynomials are asymptotically minimising.

We finally turn to consider the Erd\H{o}s--Lax type results proven in Section \ref{section:el}. Theorem \ref{thm:erdos_lax_circle} is a direct generalisation of a result due to P\'{o}lya and Szeg\H{o} that first appeared in print in \cite{lax44}. Theorem \ref{thm:generalised_erdos_lax} generalises the Erd\H{o}s--Lax inequality. We believe that a similar result should be valid without putting restrictions on the zeros in the exterior of the closed unit disc. We conjecture the following.

\begin{conjecture}
    Let $P(z) = c\prod_{k=1}^{N}(z-a_k)^{ms_k}$ where $c\in \mathbb{C}$, $m\in \mathbb{N}$, $s_k\geq 1$, $ms_k\in \N$, and $|a_k|\geq 1$. Then for any branch $f(z) = P(z)^{1/m}$, we have
    \[\|f'\|_{\T}\leq \frac{\sum_{k=1}^{N} s_k}{2}\|f\|_{\T}\]
    with equality if and only if $|a_k| = 1$ for all $k$.
\end{conjecture}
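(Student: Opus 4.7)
The plan is to mimic the Lax-type argument from Theorem \ref{thm:generalised_erdos_lax}, isolating the sole new difficulty in handling non-integer exponents at exterior zeros. Write $f = P^{1/m}$, define the reciprocal $f^{\ast} = (P^{\ast})^{1/m}$ with a compatible branch, and retain the identities $|f^{\ast}| = |f|$ and $|(f^{\ast})'| \geq |f'|$ on $\T$ (the second following from \eqref{eq:reciprocal_derivative}). Since $|a_{k}| \geq 1$, Lemma \ref{lem:polya_szego_zeros} applies, so that $g_{\zeta}(z) := f(z) + \zeta f^{\ast}(z)$ has all its zeros on $\T$ for any unimodular $\zeta$. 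Fix $z_{0} \in \T$ with $|f'(z_{0})| = \|f'\|_{\T}$ and pick $\zeta_{0}$ with $|\zeta_{0}| = 1$ so that $f'(z_{0})$ and $\zeta_{0}(f^{\ast})'(z_{0})$ point in the same direction. Then $|g_{\zeta_{0}}'(z_{0})| = |f'(z_{0})| + |(f^{\ast})'(z_{0})| \geq 2\|f'\|_{\T}$ while $\|g_{\zeta_{0}}\|_{\T} \leq 2\|f\|_{\T}$, so the conjecture reduces to establishing the Erd\H{o}s--Lax-type bound
\[
\|g_{\zeta_{0}}'\|_{\T} \leq \frac{\sum_{k} s_{k}}{2}\|g_{\zeta_{0}}\|_{\T},
\]
which is the principal new step.

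In the integer-exponent regime of Theorem \ref{thm:generalised_erdos_lax}, $g_{\zeta_{0}}$ factors as $\prod_{|a_{j}|=1}(z-a_{j})^{s_{j}}\tilde{Q}(z)$ with $\tilde{Q}$ a polynomial whose zeros all lie on $\T$, and Theorem \ref{thm:erdos_lax_circle} handles it immediately. Allowing non-integer $s_{k}$ at exterior zeros breaks this: one obtains $g_{\zeta_{0}} = B(z) H(z)$ with $B(z) = \prod_{|a_{j}|=1}(z-a_{j})^{s_{j}}$ and $H$ analytic on $\overline{\D}$, zero-free off $\T$ by Lemma \ref{lem:polya_szego_zeros}, but no longer a polynomial.

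I would attempt the required bound by extending Theorem \ref{thm:erdos_lax_circle} to accommodate mixed products of the form $B \cdot H$. Concretely, one would repeat the Szeg\H{o}--Lax critical-point calculation at a maximum of $|g_{\zeta_{0}}'|$ on $\T$, tracking the extra contribution coming from the outer-type factor $H$. A secondary attack would be via approximation: approximate the exterior fractional factors $(z-a_{k})^{s_{k}}$ uniformly on $\overline{\D}$ by polynomials whose zeros remain in $\{|z|>1\}$, apply Theorem \ref{thm:generalised_erdos_lax} to the resulting integer-exponent approximants, and pass to the limit. The hardest piece in either route is preserving the constant $\sum_{k} s_{k}/2$: in the direct approach this constant emerges from the pointwise identity $\mathrm{Re}(z/(z-a)) = 1/2$ at unimodular $a$, which no longer holds pointwise in the presence of an exterior transcendental factor; and in the approximation route the approximants necessarily have degree tending to infinity, making the preservation of the effective ``degree'' $\sum_{k} s_{k}$ on the right-hand side a nontrivial matter that will likely require a carefully chosen rational or Pad\'{e}-type truncation.
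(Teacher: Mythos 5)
This statement is not a theorem of the paper but its closing Conjecture in Section \ref{sec:conclusion}; the authors give no proof and explicitly state why their method stops short: once an exterior zero $a_k$ with $|a_k|>1$ carries a non-integer exponent $s_k$, the auxiliary function $f+\zeta f^\ast$ is in general no longer a rational power of a polynomial, so Theorem \ref{thm:erdos_lax_circle} cannot be invoked, and ``new ideas are needed.'' Your proposal runs into exactly this wall and does not get past it. The reduction you set up (reciprocal $f^\ast$, the inequality \eqref{eq:reciprocal_derivative}, Lemma \ref{lem:polya_szego_zeros}, the choice of $\zeta_0$ aligning $f'(z_0)$ and $\zeta_0(f^\ast)'(z_0)$) is verbatim the argument already used in the proof of Theorem \ref{thm:generalised_erdos_lax}; the only genuinely new content would be the bound $\|g_{\zeta_0}'\|_{\T}\leq \frac{\sum_k s_k}{2}\|g_{\zeta_0}\|_{\T}$, and for that you offer two unexecuted programmes rather than an argument. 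The direct route fails as described because the Szeg\H{o}--Lax critical-point computation in the proof of Theorem \ref{thm:erdos_lax_circle} depends on $(f')^m$ being a polynomial (that is what makes the identities \eqref{eq:prime_over_poly}--\eqref{eq:biss_over_prime} and the final appeal to Bernstein's inequality available); with an extra non-algebraic outer factor $H$ none of these formulas survive, and you do not indicate what replaces them. The approximation route has the defect you yourself flag: approximating $(z-a_k)^{s_k}$, $|a_k|>1$, by polynomials forces the degree of the approximants to infinity, so Theorem \ref{thm:generalised_erdos_lax} applied to them yields the constant $\frac{\deg}{2}$, not $\frac{\sum_k s_k}{2}$, and no mechanism is given for recovering the correct constant in the limit.

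Two further points. First, when $|a_k|>1$ and $s_k\notin\N$, the factor $(1-\overline{a}_kz)^{s_k}$ of $f^\ast$ has a branch point at $1/\overline{a}_k\in\D$, so $f^\ast$ (and hence $g_{\zeta_0}$) is not single-valued analytic on $\D$; the modulus argument in Lemma \ref{lem:polya_szego_zeros} still controls where $|f|=|f^\ast|$ can occur, but your claimed factorisation $g_{\zeta_0}=B\cdot H$ with $H$ analytic on $\overline{\D}$ needs justification it does not currently have. Second, the conjecture also asserts the characterisation of equality (equality if and only if all $a_k$ are unimodular), which your sketch never addresses. In short: what you have written is a restatement of the known reduction plus an honest list of obstacles, and the missing step is precisely the one that makes this a conjecture rather than a theorem in the paper.
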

The reason that the techniques developed in this article do not seem to extend to prove this case is that for such functions $f$, it is not guaranteed that $f+\zeta f^\ast$ from the proof of Theorem \ref{thm:generalised_erdos_lax} is a rational power of a polynomial. Instead, we believe that new ideas are needed to prove this conjecture.

\end{section}
\section*{Acknowledgement} We express our gratitude towards Associate Professor Jacob Stordal Christiansen for helpful discussions when preparing this manuscript. 
\medskip

\end{document}